            \DeclareFontFamily{U}{wncy}{}
            \DeclareFontShape{U}{wncy}{m}{n}{%
               <5>wncyr5%
               <6>wncyr6%
               <7>wncyr7%
               <8>wncyr8%
               <9>wncyr9%
               <10>wncyr10%
               <11>wncyr10%
               <12>wncyr6%
               <14>wncyr7%
               <17>wncyr8%
               <20>wncyr10%
               <25>wncyr10}{}
\newtheorem{thm}{Theorem}[section]
\newtheorem{lem}[thm]{Lemma}
\newtheorem{cor}[thm]{Corollary}
\theoremstyle{definition}
\newtheorem*{dfn}{Definition}
\newtheorem*{remark}{Remark}
\newtheorem*{acknowledgment}{Acknowledgment}
\newcommand{\z}{\mathbb Z}
\newcommand{\q}{{\mathbb Q}}
\newcommand{\F}{\mathbb F}
\def\op{\operatorname}
\def\al{\alpha}
\def\diso{\lower.4ex\hbox{$\downarrow$}\raise.4ex\hbox{\mbox{\scriptsize $\wr$}}}
\def\fph{\mathbb{F}_{\ph}}
\def\al{\alpha}
\def\iso{\,\lower .6ex\hbox{$\stackrel{\lra}{\mbox{\tiny $\sim\,$}}$}\,}
\def\la{\lambda}
\def\lg{l\raise.6ex\hbox to.2em{\hss.\hss}l}
\def\lra{\longrightarrow}
\def\md#1{\ \mbox{\rm(mod }{#1})}
\def\orb{\hbox to  .3em{$\backslash$}\backslash}
\def\p{\mathfrak{p}}
\def\ph{\phi}
\def\K{{\mathbb{K}}}
\def\rd{\op{red}}
\def\t{\theta}
\newcounter{cs}
\newcommand{\casos}{\begin{itemize}}
\newcommand{\fcasos}{\end{itemize}\setcounter{cs}{1}}
\newfont{\tit}{cmr12 scaled \magstep3}
\begin{document}

\title[Extensions of a Disc. Valuation in a Number Field]{On the Extensions of a Discrete Valuation in a Number Field}

\author{Abdulaziz Deajim}
\address{Department of Mathematics, King Khalid University,
P.O. Box 9004, Abha, Saudi Arabia} \email{deajim@gmail.com}

\author{Lhoussain El Fadil}
\address{Poly-disciplinary Faculty of Ouarzazat, Ibn Zohr University, P.O. Box 639, Ouarzazat, Morocco} \address{ Department of Mathematics, King Khalid University,
P.O. Box 9004, Abha, Saudi Arabia } \email{lhouelfadil2@gmail.com}

\keywords{Discrete valuation, extensions of valuation, valued field, number field}
\subjclass[2010]{13A18, 11Y40, 11S05}
\date{\today}

\begin {abstract}
Let $K$ be a number field defined by a monic irreducible polynomial \linebreak $F(X) \in \z[X]$, $p$ a fixed rational prime, and $\nu_p$ the discrete valuation associated to $p$. Assume that $\overline{F}(X)$ factors modulo $p$ into the product of powers of $r$ distinct monic irreducible polynomials. We present in this paper a condition, weaker than the known ones, which guarantees the existence of exactly $r$ valuations of $K$ extending $\nu_p$. We further specify the ramification indices and residue degrees of these extended valuations in such a way that generalizes the known estimates. Some useful remarks and computational examples are also given to highlight some improvements due to our result.

\end {abstract}
\maketitle

\section{{\bf Introduction}}\label{intro}

Let $K$ be a number field, $\mathbb{Z}_K$ its ring of integers, $\alpha \in \mathbb{Z}_K$ a primitive element of $K$, $\mbox{ind}(\alpha)=[\mathbb{Z}_K : \mathbb{Z}[\alpha]]$ (the index of $\mathbb{Z}[\alpha]$ in $\mathbb{Z}_K$), and $F(X)\in \mathbb{Z}[X]$ the minimal polynomial of $\alpha$ over $\mathbb{Q}$. One of the most important problems in algebraic number theory is determining the prime ideal factorization in $\z_K$ of a rational prime $p$. Due to a well-known Theorem of Hensel \cite{Hen}, this problem is directly related to the factorization of $F(X)$ in $\q_p[X]$, which in turn is related to the extensions of $\nu_p$ to $K$ (see Lemma \ref{tool}).

If $p$ does not divide the index $\mbox{ind}(\alpha)$, then a theorem of Kummer (see \cite[Theorem 7.4]{Jan}) indicates that the factorization of $p\z_K$ can be derived directly from the decomposition of $\overline{F}(X)$ modulo $p$; namely, if $\overline{F}(X) = \prod_{i=1}^r \overline{\phi}_i(X)^{l_i}$ is the factorization of $\overline{F}(X)$ into the product of powers of distinct monic irreducible polynomials $\overline{\phi}_i(X)$ modulo $p$, then \linebreak $p\z_K=\prod_{i=1}^r\p_i^{l_i}$, where $\p_i=(p, \ph_i(\alpha))$ with ramification index $e(\p_i/p)=l_i$ and residue degree  $f(\p_i/p)=\mbox{deg}(\ph_i)$. So, in particular, there are exactly $r$ valuations of $K$ extending $\nu_p$ (by Hensel's Theorem and Lemma \ref{tool}).

In 1878, R. Dedekind \cite{Ded} gave a criterion that tests when $p$ does, or does not, divide $\mbox{ind}(\alpha)$. In \cite{KK1}, S. Khanduja and M. Kumar (K-K) proved that the condition "$p$ does not divide $\mbox{ind}(\alpha)$" is necessary for the existence of exactly $r$ distinct prime ideals of $\mathbb{Z}_K$ lying above $p$ (and, thus, the existence of exactly $r$ distinct valuations of $K$ extending $\nu_p$) with generators, ramification indices, and residue degrees all as above. They went on in \cite{KK2} to ask whether it is possible to find a weaker condition that guarantees the existence of exactly $r$ valuations of $K$ extending $\nu_p$ with ramification indices and residue degrees all as above. In the same paper \cite[Theorem 1.1]{KK2}, they successfully gave a weaker sufficient condition, a result which we state in Section \ref{main results}.

The main goal of this paper is to give an improvement of \cite[Theorem 1.1]{KK2} (in the context of number fields) in the form of our main result: Theorem \ref{Main 1}. In Section \ref{main results}, we introduce some concepts and notations relevant to the work. Using these notations and terminologies, we also give an equivalent version of K-K's result followed by a statement of our main theorem. In Section \ref{lemmas}, we present some lemmas, some of which are interesting in their own, to help in proving the main theorem. In Section \ref{proofs}, we tackle the proof of the main theorem. We follow that with some quite useful remarks in Subsection \ref{remarks}. In Subsection \ref{examples}, we give some examples to shed some light on the improvements caused by our results and their consequences.

\section{Notations and Main Result}\label{main results}

Fix a rational prime $p$. Let $\mathbb{Z}_p$ be the ring of $p$-adic integers and $\nu_p$ the discrete \linebreak$p$-adic valuation associated to $p$. Extend $\nu_p$ to the ring $\mathbb{Z}_p[X]$ by defining the $p$-adic valuation of a polynomial in $\mathbb{Z}_p[X]$ as the minimum of the $p$-adic valuation of its coefficients. For a polynomial $P(X) \in \mathbb{Z}_p[X]$, denote by $\overline{P}(X)$ the reduction of $P(X)$ modulo $p$. Let \linebreak $\phi(X) \in \mathbb{Z}_p[X]$ be a fixed monic polynomial whose reduction modulo $p$ is irreducible and \linebreak $\mathbb{F}_\phi := \mathbb{Z}_p[X]/(p, \phi(X))\;(\cong \F_{p^{\Small{\mbox{deg}}(\phi)}})$ be the finite field defined by $p$ and $\phi(X)$. \linebreak Let $\mbox{red}:\mathbb{Z}_p[X] \to \mathbb{F}_\phi$ the canonical projection. For a monic polynomial $F(X) \in \mathbb{Z}_p[X]$, let
$$F(X) = a_0(X) \phi(X)^l + a_1(X) \phi(X)^{l-1} +\dots + a_{l-1}(X) \phi(X) + a_l(X)$$
be the $\phi$-adic expansion of $F(X)$ in $\mathbb{Z}_p[X]$. So $a_i(X) \in \mathbb{Z}_p[X]$ and $\mbox{deg}(a_i(X)) < \mbox{deg}(\phi(X))$ for all $i$. Set $s_F=s=\mbox{max}\{0\leq i \leq l \,|\, \nu_p(a_i(X))=0\}$. We then have the following two-case setup:\\

\noindent {\underline{\bf Case 1. $\overline{\phi}(X)$ divides $\overline{F}(X)$:}} In this case $s<l$. 
Set $\lambda_F =\lambda= \nu_p(a_l(X))/(l-s)$. Note that $\lambda$ is the slope of the line joining the two points $(s,0)$ and $(l, \nu_p(a_l(X)))$ in the Euclidean plane. We say here that $F$ satisfies the $L_{\phi, \lambda}$ property if $\nu_p(a_{i+s}(X))/i \geq \lambda$ for every $i=1, \dots, l-s$.
Let $d=\mbox{gcd}(\nu_p(a_l(X)), l-s)$, $e=(l-s)/d$, and $h=\nu_p(a_l(X))/d$. Notice that $e$ and $h$ are the unique positive coprime integers such that $\lambda=h/e$. If $F$ satisfies the $L_{\ph, \la}$ property, then set $t_j^*=\mbox{red}(a_{j+s}(X)/p^{\lfloor j\la \rfloor})$ for $j=0, \dots, l-s$. Since $\nu_p(a_j(X)) \geq j\la$ for all such $j$, it follows that if $j\la \not\in \z$, then $t_j^* =0$. Noticing that $j\la \in \z$ if and only if $j \in e \z$, we set $t_i=\mbox{red}(a_{ie+s}(X)/p^{ih})$ for $i=0, \dots, d$ and define the $\ph$-residual polynomial attached to $F$ by $F_\phi(Y) = \sum_{i=0}^d t_i Y^{d-i} \in \mathbb{F}_\phi[Y]$. Note that $t_i\neq 0$ if and only if $\nu_p(a_{ie+s}(X))=ih$. In particular, $t_0$ is never 0 and, thus, $\mbox{deg}(F_\phi(Y))=d$.\\

\noindent{\underline{\bf Case 2. $\overline{\phi}(X)$ does not divide $\overline{F}(X)$:}} In this case $s=l$. Set $\lambda_F=\lambda =0$, $d=l$, and $e=1$. Here we say that $F$ always satisfies the $L_{\phi, \lambda}$ property. For $i=0, \dots, l$, set $t_i =\mbox{red}(a_i(X))$ and define the residual polynomial attached to $F$ by $F_\phi(Y) = \sum_{i=0}^l t_i Y^{l-i} \in \mathbb{F}_\phi[Y]$.\\

We now set up the notations and assumptions for our main theorem. Let $F(X) \in \mathbb{Z}[X]$ be a monic irreducible polynomial, $\alpha$ a complex root of $F$, $K =\mathbb{Q}(\alpha)$, $\mathbb{Z}_K$ the ring of integers of $K$, and $\overline{F}(X) = \prod_{i=1}^r \overline{\phi}_i^{l_i}(X)$ modulo $p$, where $\overline{\phi}_i(X)$ are all monic and irreducible. For each $i=1, \dots, r$, let $F(X) = \sum_{j=0}^{L_i} a_{i,j}(X) \phi_i^{L_i - j}(X)$ be the $\phi_i$-adic expansion of $F(X)$ in $\mathbb{Z}[X]$, where $\phi_i(X) \in \mathbb{Z}[X]$ is a monic lift of $\overline{\phi_i}(X)$. Set $s_{F,i}=\mbox{max}\{j=0, \dots, L_i \,|\, \nu_p(a_{i,j}(X)) =0\}$, $\lambda_i = \nu_p(a_{i,L_i}(X))/(L_i -s_{F,i})$, $m_i=\mbox{deg}(\phi_i(X))$, and $d_i =\mbox{gcd}(\nu_p(a_{i,L_i}(X)), L_i - s_{F,i})$. We see that $s_{F,i} < L_i$ for each $i$ as $\overline{\phi_i}(X)$ divides $\overline{F}(X)$. So whenever the property $L_{\phi, \lambda}$ is mentioned in the sequel, we are referring only to case 1 above. It should not cause any confusion, though, that instead of using the notation $s_{F,i}$, we will use $s_F$ (or even $s$) when $\phi_i$ (or $\phi_i$ and $F$) are clear in the context.

Given the above notations, assumptions, and terminology, K-K's result \linebreak(\cite[Theorem 1.1]{KK2}), which we seek to improve here, can be reformulated (in our context of number fields) as follows:

{\it If for every $i=1, \dots, r$, $F(X)$ satisfies the $L_{\phi_i, \lambda_i}$ property and $d_i=1$ (or, equivalently, $F_{\phi_i}(Y)$ is linear), then there are exactly $r$ valuations $w_1, \dots, w_r$ of $K$ extending $\nu_p$. Moreover, the respective ramification indices and residue degrees of these valuations are $e(w_i)=l_i$ and $f(w_i)=m_i$}.

Our main result, Theorem \ref{Main 1} below, relaxes the linearity of $F_{\phi_i}(Y)$ and, further, specifies the ramification indices and residue degrees of the valuations $w_i$'s in such a way that generalizes K-K's result.

\begin{thm}\label{Main 1}

Keep the notations, assumptions, and terminology as above. For $i=1, \dots, r$, suppose that either

(i) $l_i=1$, or

(ii) $l_i \geq 2$, $F(X)$ satisfies the $L_{\phi_i, \lambda_i}$ property, and $F_{\phi_i}(Y)$ is irreducible.\\
Then there are exactly $r$ valuations $w_1, \dots, w_r$ of $K$ extending  $\nu_p$. Furthermore, for each \linebreak $i=1, \dots, r$, the ramification index of $w_i$  is $e(w_i)=  l_i/d_i$ and its residue degree is $\displaystyle{f(w_i) = m_i d_i}$.

\end{thm}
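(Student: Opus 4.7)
The plan is to translate the statement about valuations of $K$ extending $\nu_p$ into one about the factorization of $F(X)$ in $\mathbb{Z}_p[X]$: by Lemma \ref{tool}, such valuations correspond bijectively to the monic irreducible factors of $F$ over $\mathbb{Q}_p$, with matching ramification indices and residue degrees. Thus it suffices to prove that $F(X)$ factors in $\mathbb{Z}_p[X]$ as $F = \prod_{i=1}^r F_i$ with each $F_i$ monic and irreducible, $\deg F_i = l_i m_i$, ramification index $l_i/d_i$, and residue degree $m_i d_i$. Producing such a factorization (without irreducibility yet) is a direct application of Hensel's lemma to the pairwise coprime decomposition $\overline{F}(X) = \prod_{i=1}^r \overline{\phi}_i(X)^{l_i}$ over $\mathbb{F}_p$, yielding monic $F_i \in \mathbb{Z}_p[X]$ with $\overline{F}_i = \overline{\phi}_i^{l_i}$. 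It then remains only to show each $F_i$ is irreducible over $\mathbb{Q}_p$ with the predicted invariants.

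Case (i), where $l_i = 1$, is immediate: $F_i$ is monic with irreducible reduction $\overline{\phi}_i$, so any proper monic factorization in $\mathbb{Z}_p[X]$ would force one of $\overline{\phi}_i$; hence $F_i$ is irreducible, defining an unramified extension of degree $m_i$. Since $L_i - s_{F,i} = l_i = 1$ forces $d_i = 1$, one gets $e(w_i) = 1 = l_i/d_i$ and $f(w_i) = m_i = m_i d_i$, as required.

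The substance lies in case (ii), which I would handle via Newton-polygon and residual-polynomial machinery applied to $F_i$ itself. Writing $F = F_i G_i$ with $\overline{G}_i$ coprime to $\overline{\phi}_i$ (so $G_i$ is a unit modulo $(p,\phi_i)$), one first has to verify that the $L_{\phi_i,\lambda_i}$ property of $F$ descends to the analogous property for $F_i$ --- i.e.\ the principal Newton polygon of $F_i$ with respect to $\phi_i$ is a single segment of slope $\lambda_i = h_i/e_i$ in lowest terms, with $e_i = l_i/d_i$ --- and that the residual polynomial of $F_i$ coincides, up to a nonzero scalar in $\mathbb{F}_{\phi_i}$, with $F_{\phi_i}(Y)$. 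I expect this transfer to be exactly the content of the technical lemmas of Section \ref{lemmas}, and it is the main obstacle: one must control the interference between $F_i$ and the Hensel cofactor $G_i$ in the $\phi_i$-adic expansion of $F$. Once it is in place, the classical theorem of Ore applies: a monic polynomial in $\mathbb{Z}_p[X]$ whose $\phi_i$-Newton polygon is a single segment of slope $h/e$ (in lowest terms) and whose residual polynomial is irreducible is itself irreducible over $\mathbb{Q}_p$, with ramification index $e$ and residue degree $m_i \cdot \deg(F_{\phi_i})$. Applied to $F_i$, this gives irreducibility with $e(w_i) = l_i/d_i$ and $f(w_i) = m_i d_i$.

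Finally, the identity $\sum_{i=1}^r e(w_i)f(w_i) = \sum_{i=1}^r l_i m_i = \deg F = [K:\mathbb{Q}]$ shows that the $r$ valuations produced already exhaust all extensions of $\nu_p$ to $K$, completing the proof.
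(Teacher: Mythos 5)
Your proposal is correct, and its skeleton coincides with the paper's: Hensel's lemma yields $F=\prod_{i=1}^r f_i$ in $\mathbb{Z}_p[X]$ with $\overline{f_i}=\overline{\phi_i}^{\,l_i}$, Lemma \ref{tool} reduces everything to the irreducibility of each $f_i$ over $\mathbb{Q}_p$, the case $l_i=1$ is dispatched exactly as you do, and the ``transfer'' you single out as the main obstacle --- that $f_i$ inherits the $L_{\phi_i,\lambda_i}$ property from $F$ and that $({f_i})_{\phi_i}(Y)=c_i\,F_{\phi_i}(Y)$ for some nonzero $c_i\in\mathbb{F}_{\phi_i}$ --- is precisely Lemma \ref{slopezero}, applied to $F=f_ig_i$ with $g_i=\prod_{j\ne i}f_j$, whose reduction is coprime to $\overline{\phi_i}$. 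The only genuine divergence is the endgame: you quote Ore's classical theorem as a black box for both the irreducibility of $f_i$ and the invariants $e(w_i)=l_i/d_i$, $f(w_i)=m_id_i$, whereas the paper proves these from scratch --- irreducibility via Lemma \ref{irred} (which rests on the multiplicativity of residual polynomials, Lemma \ref{prod}), and the invariants by a direct computation: Lemma \ref{valuation} gives $w_i(\phi_i(\alpha))=\lambda_i$, whence the value group of $w_i$ is $\mathbb{Z}[\lambda_i]$, so $e(w_i)=[\mathbb{Z}[\lambda_i]:\mathbb{Z}]=l_i/d_i$, and then $f(w_i)=\deg(f_i)/e(w_i)=m_id_i$ by the fundamental identity in the complete field $\mathbb{Q}_p(\alpha)$. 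So your route buys brevity at the cost of self-containedness; the point of the paper's Section \ref{lemmas} is in effect to reprove the slice of Ore's theory it needs. Two minor points: your closing count $\sum_i e(w_i)f(w_i)=\deg F$ is redundant, since the bijection of Lemma \ref{tool} already shows the $r$ valuations exhaust all extensions of $\nu_p$ once each $f_i$ is irreducible; and your observation that $l_i=1$ forces $L_i-s_{F,i}=1$ and hence $d_i=1$ (so the stated formulas also hold in case (i)) is a worthwhile detail that the paper's proof passes over silently.
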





In remark (1-a) of Subsection \ref{remarks}, we emphasize the necessity of $F$ satisfying the $L_{\phi_i, \lambda_i}$ property above for all $i=1, \dots, r$ in order to get exactly $r$ valuations $w_1, \dots, w_r$ of $K$ extending  $\nu_p$. Besides, Theorem \ref{Main 1} helps in giving a new, easier proof of K-K's result of \cite{KK1} (see remark (3) of Subsection \ref{remarks}).

\section{Lemmas}\label{lemmas}

Recall that if $(L, \nu)$ is a valued field and $(\hat{L}, \hat{\nu})$ is its $\nu$-adic completion, then the separable closure $L_h$ of $L$ in $\hat{L}$ is called a Henselization of $L$ with respect to $\nu$ (see \cite[Chapter II - $\S$6]{Neu}). Denote by $\nu_h$ the restriction to $L_h$ of the unique extension of $\nu$ to the algebraic closure of $L_h$.

\begin{lem}\label{tool}{\normalfont (}\cite[Thoerem 2.D.]{KK2}{\normalfont )} Let $(L,\nu)$ be a valued field, $L(\t)$ an algebraic extension of $L$, $F(X)$ the minimal polynomial of $\t$ over $L$, and $L_h$ the Henselization of $L$ in $\hat{L}$. Then, the valuations $w_1,\dots, w_t$ of $L(\t)$ extending $\nu$ are in one-to-one correspondence with the irreducible factors $F_1(X),\dots, F_t(X)$ of $F(X)$ in  $L_h[X]$. Moreover, the valuations $w_i$ are defined precisely by $$w_i(\sum_j a_j\t^j)=\nu_h(\sum_j a_j\t_i^j)$$ for any root $\t_i$ of $F_i(X)$ and $a_j \in L$.
\end{lem}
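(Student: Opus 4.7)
The plan is to translate the problem to factorization over $\q_p$ via Lemma~\ref{tool}, then analyze each local factor separately using Hensel's Lemma together with a Newton-polygon and residual-polynomial argument. Since $\q_p$ is the Henselization of $\q$ at $\nu_p$, Lemma~\ref{tool} reduces counting the valuations of $K$ extending $\nu_p$ to counting the monic irreducible factors of $F(X)$ in $\q_p[X]$. Because $\overline{\phi}_1,\ldots,\overline{\phi}_r$ are distinct monic irreducibles and $\overline{F}=\prod_{i=1}^{r}\overline{\phi}_i^{l_i}$, Hensel's Lemma yields a factorization $F = F_1\cdots F_r$ in $\z_p[X]$ with each $F_i$ monic, $\overline{F}_i = \overline{\phi}_i^{l_i}$, $\deg F_i = l_i m_i$, and the $F_i$'s pairwise coprime in $\q_p[X]$. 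It then suffices to show that under (i) or (ii) each $F_i$ is already irreducible in $\q_p[X]$ and that the corresponding local extension has ramification index $l_i/d_i$ and residue degree $m_i d_i$; the former gives exactly $r$ valuations $w_1,\dots,w_r$, the latter supplies the claimed numerical data.

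The case $l_i=1$ is immediate: $\overline{F}_i=\overline{\phi}_i$ is already irreducible, so Hensel's Lemma gives that $F_i$ itself is irreducible and that $\q_p[X]/(F_i)$ is the unramified extension of $\q_p$ of degree $m_i$. The general setup also forces $L_i-s_{F,i}=l_i=1$, so $d_i=\gcd(\nu_p(a_{i,L_i}),1)=1$, and the stated values $e(w_i)=1=l_i/d_i$ and $f(w_i)=m_i=m_i d_i$ match.

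For $l_i\ge 2$ (case (ii)) the key observation is that $G_i:=\prod_{j\ne i}F_j$ satisfies $\overline{G}_i\not\equiv 0\pmod{\overline{\phi}_i}$, so $G_i$ is a $\phi_i$-adic unit in $\z_p[X]$. This lets one transfer the $L_{\phi_i,\lambda_i}$ property and the residual polynomial from $F=F_iG_i$ down to $F_i$: namely, $F_i$ still satisfies $L_{\phi_i,\lambda_i}$, its $\phi_i$-Newton polygon is the single segment of slope $\lambda_i=h_i/e_i$ (with $e_i=l_i/d_i$ and $\gcd(e_i,h_i)=1$), and $(F_i)_{\phi_i}(Y)=c\,F_{\phi_i}(Y)$ for some $c\in\F_{\phi_i}^{\ast}$, so $(F_i)_{\phi_i}(Y)$ is irreducible of degree $d_i$ over $\F_{\phi_i}$. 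The single-slope Ore-type irreducibility criterion, assembled from the preparatory lemmas of Section~\ref{lemmas}, then applies: for any root $\theta$ of any irreducible factor of $F_i$ in $\overline{\q}_p$, one shows $\nu_p(\phi_i(\theta))=\lambda_i$, which forces $e_i\mid e(\q_p(\theta)/\q_p)$, and the reduction of $\phi_i(\theta)^{e_i}/p^{h_i}$ is a root of the irreducible $(F_i)_{\phi_i}(Y)$, which forces $m_id_i\mid f(\q_p(\theta)/\q_p)$. Since $e(\q_p(\theta)/\q_p)\cdot f(\q_p(\theta)/\q_p)\le l_im_i=(l_i/d_i)(m_id_i)$, both divisibilities saturate, so $F_i$ must be irreducible with $e(w_i)=l_i/d_i$ and $f(w_i)=m_id_i$.

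The principal obstacle is exactly this last step: converting irreducibility of $F_{\phi_i}(Y)$ into irreducibility of $F_i$ over $\q_p$ together with the exact ramification and residue data. The lemmas of Section~\ref{lemmas} will have to pin down $\nu_p(\phi_i(\theta))$ for every conjugate root $\theta$, identify the reduction of $\phi_i(\theta)^{e_i}/p^{h_i}$ as a root of $(F_i)_{\phi_i}(Y)$ inside the residue field of $\q_p(\theta)$, and verify that any hypothetical nontrivial factorization of $F_i$ over $\q_p$ would induce a corresponding nontrivial factorization of the irreducible residual polynomial, producing the required contradiction.
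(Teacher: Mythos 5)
You have proved the wrong statement. The statement you were asked to prove is Lemma~\ref{tool}: the general correspondence, for an arbitrary valued field $(L,\nu)$, between the valuations of $L(\t)$ extending $\nu$ and the irreducible factors of the minimal polynomial $F(X)$ in $L_h[X]$, together with the explicit formula $w_i(\sum_j a_j\t^j)=\nu_h(\sum_j a_j\t_i^j)$. What you wrote is instead a proof sketch of Theorem~\ref{Main 1}: you count the valuations of the number field $K$ extending $\nu_p$, split $F$ over $\z_p$ by Hensel's Lemma, and compute $e(w_i)$ and $f(w_i)$ via a residual-polynomial argument. Worse, your very first reduction invokes Lemma~\ref{tool} itself (``Lemma~\ref{tool} reduces counting the valuations of $K$ extending $\nu_p$ to counting the monic irreducible factors of $F(X)$ in $\q_p[X]$''), so read as a proof of Lemma~\ref{tool} your argument is circular, and read as written it establishes a different result. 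For the record, the paper gives no proof of this lemma either --- it is quoted from \cite[Theorem 2.D]{KK2} --- but that does not turn your text into a proof of it.

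A genuine proof of Lemma~\ref{tool} uses none of the Newton-polygon or Ore-type machinery in your proposal. The essential input is that the Henselization $(L_h,\nu_h)$ is Henselian, so $\nu_h$ extends \emph{uniquely} to every algebraic extension of $L_h$. Writing $F=F_1\cdots F_t$ with $F_i$ irreducible in $L_h[X]$ (the $F_i$ are distinct since $F$ is separable, $\t$ being separable over $L$ as a root of its minimal polynomial in this setting), the isomorphism $L(\t)\otimes_L L_h\cong\prod_{i=1}^t L_h[X]/(F_i(X))$ gives for each $i$ an $L$-embedding $L(\t)\hookrightarrow L_h(\t_i)$, $\t\mapsto\t_i$; pulling back the unique extension of $\nu_h$ to $L_h(\t_i)$ along this embedding defines $w_i$ and yields exactly the displayed formula, independently of the chosen root $\t_i$ of $F_i$ because any two roots are conjugate over $L_h$ and $\nu_h$ extends uniquely. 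One must then check that $w_1,\dots,w_t$ are pairwise inequivalent (embeddings corresponding to distinct factors are not conjugate over $L_h$) and that they exhaust all extensions of $\nu$: any extension $w$ of $\nu$ to $L(\t)$ induces an embedding of $L(\t)$ into a fixed algebraic closure of $\hat{L}$ compatible with $w$, under which $\t$ maps to a root of some $F_i$, so $w=w_i$. None of these steps appears in your proposal, so as an attempt on the stated lemma it has a complete gap rather than a repairable flaw.
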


We note on passing that since $\q$ is of characteristic zero, the factorizations in $\q_h[X]$ and in $\q_p[X]$ of a polynomial with integer coefficients are the same.

Throughout this section, let $p\in \z$ be a fixed prime, $\z_p$ the ring of $p$-adic integers, $f(X), g(X), \phi(X) \in \mathbb{Z}_p[X]$ be all monic, and $$f(X)=\sum_{i=0}^{\l_1} a_i(X) \phi(X)^{\l_1 -i}\;\; \mbox{and} \;\; g(X) =\sum_{i=0}^{\l_2} b_i(X) \phi(X)^{\l_2 -i}$$ the $\phi$-adic expansions of  $f$ and $g$ in $\mathbb{Z}_p[X]$.

The following lemma gives a procedure to compute the $\phi$-adic expansion of the product $fg(X)$ .

\begin{lem}\label{expprod}
The $\phi$-adic expansion of the product $fg(X)$ in $\z_p[X]$ is given by the following steps:

(1) Let $\l= \l_1 + l_2$ and, for each $k = 0, \dots, \l$, set
$c_k(X) = \sum_{i=0}^k a_i(X) b_{k-i}(X)$, where $a_i(X)=0$ for $i>l_1$ and $b_j(X)=0$ for $j>l_2$.

(2) For each $k=0, \dots, \l$, use the Euclidean algorithm to find
$Q_k(X)$ and $R_k(X)$ in $\mathbb{Z}_p[X]$ such that $c_k(X) =
\phi(X) Q_k(X) + R_k(X)$, where $R_k(X)=0$ or $\mbox{deg}(R_k(X))< \mbox{deg}(\ph(X))$.

(3) Set $A_{\l}(X) = R_{\l}(X)$, and for each $k=0, \dots, \l-1$,
set $A_k(X)= Q_{k+1}(X) + R_k(X)$. Furthermore, if
$\mbox{deg}(a_0b_0(X))< \mbox{deg}(\phi(X))$, then set $A_{-1}(X)
=0$; otherwise set $\displaystyle{A_{-1}(X) = Q_0(X)}$.

(4) Conclude that $fg(X) = \sum_{k=-1}^{\l} A_k(X) \phi(X)^{\l-k}$
is the $\phi$-adic expansion of $fg(X)$.

\end{lem}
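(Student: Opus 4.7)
The plan is to prove the lemma by a direct expansion-and-comparison argument. First, I would multiply the two $\phi$-adic expansions of $f$ and $g$ and collect terms of like $\phi$-degree, obtaining
\[
fg(X) \;=\; \sum_{i=0}^{\ell_1}\sum_{j=0}^{\ell_2} a_i(X) b_j(X)\, \phi(X)^{(\ell_1+\ell_2)-(i+j)} \;=\; \sum_{k=0}^{\ell} c_k(X)\,\phi(X)^{\ell-k},
\]
with $c_k(X)$ exactly as in step~(1) (using the stated conventions $a_i=0$ for $i>\ell_1$ and $b_j=0$ for $j>\ell_2$). This is not yet the $\phi$-adic expansion of $fg$, because the $c_k$ need not satisfy the degree bound: since $\deg(a_i),\deg(b_j)<\deg(\phi)$, one has only $\deg(c_k)<2\deg(\phi)$ in general.

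Next, I would apply step~(2), writing each $c_k(X) = \phi(X)Q_k(X) + R_k(X)$ via Euclidean division (so $R_k=0$ or $\deg(R_k)<\deg(\phi)$). The bound $\deg(c_k)<2\deg(\phi)$ simultaneously forces $\deg(Q_k)<\deg(\phi)$. Substituting and shifting the quotient sum by $k\mapsto k+1$ would give
\[
fg(X) \;=\; Q_0(X)\phi(X)^{\ell+1} \;+\; \sum_{k=0}^{\ell-1}\bigl(Q_{k+1}(X)+R_k(X)\bigr)\phi(X)^{\ell-k} \;+\; R_\ell(X),
\]
matching precisely the coefficients $A_k$ defined in step~(3).

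To finish (step~(4)), I would verify that these $A_k$ satisfy the required degree bound. For $0\le k\le \ell$ this is immediate from $\deg(Q_{k+1}),\deg(R_k)<\deg(\phi)$. For the boundary term, the key observation is that $c_0 = a_0 b_0$: hence Euclidean division gives $Q_0 = 0$ precisely when $\deg(a_0 b_0)<\deg(\phi)$, which justifies the two-case definition of $A_{-1}$; and when $Q_0\neq 0$, its degree is $\deg(a_0 b_0)-\deg(\phi)<\deg(\phi)$. Since each step is an algebraic identity or a degree estimate, I anticipate no conceptual obstacle — the only care needed is the bookkeeping of indices around the boundary term $A_{-1}$, which is exactly where step~(3) introduces its case distinction.
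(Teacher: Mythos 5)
Your proposal is correct and follows essentially the same route as the paper's (much terser) proof: expand the product to get the $c_k$, perform Euclidean division by $\phi$, and carry each quotient to the next-higher coefficient. Your added bookkeeping --- the bound $\deg(c_k)<2\deg(\phi)$ forcing $\deg(Q_k)<\deg(\phi)$, and the case analysis for $A_{-1}$ via $c_0=a_0b_0$ --- is exactly the verification the paper leaves implicit.
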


\begin{proof}
It is clear that $fg(X)=c_0(X)\ph(X)^{l}+\dots+c_{\l}(X)$. It is, however, not guaranteed that $\mbox{deg}(c_k(X))<\mbox{deg}(\phi(X))$ for $k=0, \dots, l$. So it suffices to
make the Euclidean division $c_k(X)=\ph(X)Q_k(X)+R_k(X)$, keep the
remainder for the $k^{th}$ coefficient and transfer the quotient
to the $(k-1)^{st}$ coefficient.
\end{proof}

\begin{remark}
Note that for $k=0, \dots, l$, $\nu_p(c_k(X)) \geq \mbox{min}\{\nu_p(Q_k(X)), \nu_p(R_k(X))\}$. But as $Q_k(X)$ and $R_k(X)$ are respectively the quotient and the remainder resulting from the Euclidian division of $c_k(X)$ by a monic polynomial, we indeed have $$\nu_p(c_k(X)) = \mbox{min}\{\nu_p(Q_k(X)), \nu_p(R_k(X))\}.$$
\end{remark}

\begin{lem}\label{slopezero}
Assume that $\overline{\phi}(X)$ is irreducible, $\overline{\ph}(X)$ does not divide $\overline{g}(X)$ modulo $p$, and $\overline{f}(X)$ is a power of $\overline{\ph}(X)$ modulo $p$. Then $f(X)$ satisfies the $L_{\phi, \lambda_f}$ property if and only if $fg(X)$ satisfies the $L_{\phi, \lambda_f}$ property. If any of these equivalent conditions holds, then $(fg)_\phi(Y) = c f_\phi(Y)$ for some nonzero constant $c \in \fph$.
\end{lem}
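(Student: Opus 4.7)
The plan is to unpack the hypotheses into precise statements about the $\phi$-adic coefficients of $f$ and $g$, apply Lemma~\ref{expprod} to identify the key invariants of $fg$, and then prove both directions of the equivalence and the residual-polynomial identity. Since $g \equiv b_{l_2}(X) \pmod{\phi}$, the irreducibility of $\overline{\phi}$ together with $\overline{\phi} \nmid \overline{g}$ gives $\nu_p(b_{l_2}) = 0$ and $\overline{b_{l_2}}$ a unit in $\mathbb{F}_\phi$. From $\overline{f} = \overline{\phi}^k$ for some $k \geq 1$ and uniqueness of the $\overline{\phi}$-adic expansion, $\overline{a_i} = 0$ for all $i \neq l_1 - k$; hence $s_f = l_1 - k < l_1$ and $\lambda_f > 0$. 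In Lemma~\ref{expprod} the only surviving summand of $c_l$ is $a_{l_1} b_{l_2}$; since $\overline{\phi}$ is irreducible and neither $\overline{a_{l_1}/p^{\nu_p(a_{l_1})}}$ nor $\overline{b_{l_2}}$ is divisible by $\overline{\phi}$ (both are nonzero of degree $< \deg\overline{\phi}$), a short Euclidean-division argument yields $\nu_p(A_l) = \nu_p(a_{l_1})$. Reducing $fg \equiv \phi^k g \pmod{p}$ and comparing $\overline{\phi}$-adic expansions identifies $\overline{A_{s_f + i}} = \overline{b_i}$ for $0 \leq i \leq l_2$ (and $\overline{A_j} = 0$ otherwise), so $s_{fg} = s_f + l_2$. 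Consequently $l - s_{fg} = l_1 - s_f$, $\lambda_{fg} = \lambda_f$, $d_{fg} = d_f$, $e_{fg} = e_f$, $h_{fg} = h_f$.

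For the forward direction, assume $f$ satisfies $L_{\phi, \lambda_f}$. For $m \geq s_{fg}$, every decomposition $m = i_1 + i_2$ with $i_1 \leq l_1$ and $i_2 \leq l_2$ forces $i_1 \geq m - l_2 \geq s_f$; if $i_1 > s_f$ then $\nu_p(a_{i_1}) \geq (i_1 - s_f)\lambda_f \geq (m - s_{fg})\lambda_f$, so with $\nu_p(b_{i_2}) \geq 0$ we obtain $\nu_p(c_m) \geq (m - s_{fg})\lambda_f$. Combined with $A_j = R_j + Q_{j+1}$ and $\nu_p(R_j), \nu_p(Q_j) \geq \nu_p(c_j)$ (the remark after Lemma~\ref{expprod}), this gives $\nu_p(A_j) \geq (j - s_{fg})\lambda_f$, establishing $L_{\phi, \lambda_f}$ for $fg$. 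Conversely, assume $fg$ satisfies $L_{\phi, \lambda_f}$, and induct downward on $j$ from $l_1$ (base trivial by the definition of $\lambda_f$) to $s_f + 1$, with hypothesis $\nu_p(a_{i'}) \geq (i' - s_f)\lambda_f$ for $i' > j$. Decompose $c_{j + l_2} = a_j b_{l_2} + E_j$ where $E_j = \sum_{i_1 > j} a_{i_1} b_{j + l_2 - i_1}$ satisfies $\nu_p(E_j) \geq (j + 1 - s_f)\lambda_f$; the same style of bound applied to $c_{j+l_2+1}$ gives $\nu_p(Q_{j+l_2+1}) \geq (j+1-s_f)\lambda_f$. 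Splitting the Euclidean division of $c_{j+l_2}$ by $\phi$ accordingly, the $a_j b_{l_2}$-remainder has valuation exactly $\nu_p(a_j)$ (by reapplying the irreducibility argument), and the $E_j$-remainder has valuation $\geq \nu_p(E_j)$. Hence if $\nu_p(a_j) < (j+1-s_f)\lambda_f$, then $\nu_p(A_{j+l_2}) = \nu_p(a_j)$, which is $\geq (j - s_f)\lambda_f$ by the $L_{\phi,\lambda_f}$ hypothesis for $fg$; otherwise the bound is automatic.

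For the residual-polynomial identity, set $m = je + s_{fg}$ for $0 \leq j \leq d$. The same leading-order analysis---isolating the summand $a_{je+s_f} b_{l_2}$ of $c_m$, whose valuation is $\nu_p(a_{je+s_f})$, while all other summands have strictly higher $p$-adic valuation---gives $T_j = \overline{A_m / p^{jh}} = t_j \cdot \overline{b_{l_2}}$ in $\mathbb{F}_\phi$, uniformly for $t_j \neq 0$ (where $\nu_p(A_m) = jh$ exactly) and for $t_j = 0$ (where both sides vanish). Summing over $j$ yields $(fg)_\phi(Y) = \overline{b_{l_2}} \cdot f_\phi(Y)$ with $c = \overline{b_{l_2}} \neq 0$. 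The main obstacle is the backward induction: it hinges on the asymmetry that in $c_{j+l_2}$ only the term $a_j b_{l_2}$ can realize the valuation $\nu_p(a_j)$, every other summand being forced to strictly higher valuation by the inductive hypothesis; combined with $\nu_p(b_{l_2}) = 0$ and the irreducibility of $\overline{\phi}$, this makes the remainder of $a_j b_{l_2}$ modulo $\phi$ have valuation exactly $\nu_p(a_j)$, which is what lets one recover information on the coefficients of $f$ from those of $fg$.
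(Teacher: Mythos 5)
Your proposal is correct and takes essentially the same route as the paper's proof: both work with the coefficients $c_k$, $Q_k$, $R_k$, $A_k$ of Lemma~\ref{expprod}, get $s_{fg}=s_f+l_2$ and $\nu_p(A_l)=\nu_p(a_{l_1})$ from the irreducibility of $\overline{\phi}$ applied to products of polynomials of degree $<\deg\phi$, prove the converse by isolating the term $a_j b_{l_2}$ at a maximal violating index (your downward induction is the paper's maximal-index contradiction made explicit, and in fact spells out the ``remainder of $a_jb_{l_2}$ has valuation exactly $\nu_p(a_j)$'' step the paper leaves implicit), and obtain $(fg)_\phi(Y)=\mathrm{red}(b_{l_2})\,f_\phi(Y)$ by computing $A_{l_2+ie}/p^{ih}$ modulo $(p,\phi)$. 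One cosmetic remark: since $f$ is monic, a degree count forces $\overline{f}=\overline{\phi}^{\,l_1}$, so your case $k<l_1$ is vacuous and $s_f=0$, exactly as the paper asserts.
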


\begin{proof}
Following the notations of Lemma \ref{expprod}, consider $c_k$, $Q_k$, $R_k$, and $A_k$. As $\overline{f}(X)$ is a power of $\overline{\phi}(X)$, $a_0(X)=1$, $\nu_p(a_i(X))>0$ for all $i=1, \dots, l-1$, and $\overline{f}(X)=\overline{\phi}(X)^{l_1}$ modulo $p$. Thus $s_f=0$ and $\lambda_f=\nu_p(a_{l_1}(X))/l_1$. On the other hand, since $\overline{\phi}(X)$ does not divide $\overline{g}(X)$, $\nu_p(b_{l_2}(X))=0$.

Assume now that $f$ satisfies the $L_{\phi, \lambda_f}$ property. So $\nu_p(a_i(X))\ge i\lambda_f$ for $i=1,\dots, l_1$. Set $c_{l_2}(X)=a_0(X) b_{l_2}(X)+\sum_{i=1}^{l_2}a_i(X) b_{l_2-i}(X)$ where, for $i>l_1$ (if any), $a_i(X)=0$ (and hence $\nu_p(a_i(X))=\infty$ for all such $i$). Then $\nu_p(c_{l_2}(X))=0$ since $\displaystyle{\nu_p(a_0(X)b_{l_2}(X))=0}$ and $\nu_p(a_i(X)b_{l_2 -i}(X))\geq \nu_p(a_i(X)) \geq i\lambda_f>0$ for $i=1, \dots, l_2$. Moreover, $\overline{\phi}(X)\nmid \overline{a_0}(X)\overline{b_{l_2}}(X)$ because $\overline{\phi}(X)$ is irreducible and both $\mbox{deg}(\overline{a_0}(X)), \, \mbox{deg}(\overline{b_{l_2}}(X))<\mbox{deg}(\overline{\phi}(X))$. Since \linebreak $\overline{c_{l_2}}(X) = \overline{a_0}(X)\overline{b_{l_2}}(X)$ modulo $p$, it thus follows that $\overline{R_{l_2}}(X) \neq \overline{0}$ modulo $p$. Thus, \linebreak $\nu_p(R_{l_2}(X))=0$, $\nu_p(A_{l_2}(X))=0$, and $s_{fg}=l_2$.

For $i=1, \dots, l_1$, we can write $c_{l_2 +i}(X)=\sum_{j=0}^{l_2} a_{i+j}(X)b_{l_2 -j}(X)$. As, for $j=1, \dots, l_2$, $\nu_p(a_{i+j}(X))\geq (i+j)\lambda_f$, it follows that $\nu_p(c_{l_2 +i}(X))\geq i\lambda_f$, $\nu_p(Q_{l_2+i}(X))\geq i\lambda_f$, and $\nu_p(R_{l_2+i}(X))\geq i\lambda_f$. Therefore, $\nu_p(A_{l_2+i}(X))\geq i\lambda_f > 0$ and, thus, $s_{fg}=l_2$. To prove that $fg$ satisfies the $L_{\ph, \la_f}$ property, it remains to show that $\la_f=\nu_p(A_l)/l_1$. Note that $c_l(X)=a_{l_1}(X)b_{l_2}(X)=\phi(X)Q_l(X)+A_l(X)$. Since $$\nu_p(Q_l(X))\geq l_1 \lambda_f \;\; \mbox{and} \;\; \nu_p(c_l(X))=\nu_p(a_{l_1}(X)b_{l_2}(X))=\nu_p(a_{l_1}(X))=l_1 \lambda_f,$$ we have the equality $c_l(X)/p^{l_1 \lambda_f}= \phi(X) \, Q_l(X)/p^{l_1 \lambda_f}+ A_l(X)/p^{l_1 \lambda_f}$ in $\mathbb{Z}_p[X]$. So, \linebreak $\overline{A_l}(X)/p^{l_1 \lambda_f}\neq \overline{0}$ modulo $p$ as $\nu_p(c_l(X)/p^{l_1\lambda_f})=0$ and $\overline{\phi}(X) \nmid \overline{c_l}(X)/p^{l_1 \lambda_f}$. Hence, \linebreak $\nu_p(A_l(X))=l_1\lambda_f$ and, therefore, $fg$ satisfies the $L_{\phi, \lambda_f}$ property.

Conversely, let $\lambda=\nu_p(a_{l_1}(X))/l_1$, and assume that $fg(X)$ satisfies the $L_{\phi,\lambda}$ property. As $a_0(X)=1$, $c_0(X)=a_0(X)b_0(X)=b_0(X)$, and $\mbox{deg}(b_0(X)) < \mbox{deg}(\phi(X))$, the quotient $Q_0(X)=0$. So $A_{-1}(X)= Q_0(X)=0$. Since $c_{l_2}(X)=a_0(X)b_{l_2}(X)+\sum_{i=1}^{l_2} a_i(X)b_{l_2-i}(X)$, it follows that $\nu_p(\sum_{i=1}^{l_2}a_i(X)b_{l_2-i}(X))\geq 1$ (as $\nu_p(a_i(X))>0$ for $i=1,\dots, l_2$), and \linebreak $\nu_p(a_0(X))+\nu_p(b_{l_2}(X))=0$, $\nu_p(c_{l_2}(X))=0$. It also follows that $\nu_p(A_{l_2}(X))=0$. On the other hand, for $i\geq 1$, $\nu_p(c_{l_2+i}(X))\geq 1$ as $c_{l_2+i}(X)=\sum_{j=i}^{l_2+i} a_j(X) b_{l_2+i-j}(X)$ and $\nu_p(a_j(X))\geq 1$ for $j\geq i \geq 1$. It thus follows that $\nu_p(A_{l_2+i}(X)\geq 1$ for $i\geq 1$. Hence, $$s_{fg}=\mbox{max}\{0\leq i \leq l\,|\,\nu_p(A_i(X))=0\}=l_2.$$ Assume that $\nu_p(a_i(X))<i\la$ for some $1\leq i\leq l_1$ such that; let $i_0$ be the maximum such index. Consider
$\displaystyle{c_{\l_2+i_0}(X)=a_{i_0}(X)b_{\l_2}(X)+\sum_{i\ge i_0+1}a_i(X)b_{\l_2-i}(X)}$. Then $\displaystyle{\nu_p(c_{\l_2+i_0}(X))<i_0\la}$, $\nu_p(A_{\l_2+i_0}(X))<i_0\la$ and $fg(X)$ does not satisfy the $L_{\phi,\la}$ property. Hence, $\nu_p(a_i(X))\geq i\lambda$ for all $i=1, \dots, l_1$, and $f(X)$ satisfies the $L_{\phi,\la}$ property.

Now assume that one of the above conditions holds. Let $d=\mbox{gcd}(\nu_p(a_{l_1}(X)),l_1)$, $e=l_1/d$, $h=\nu_p(a_{l_1}(X))/d$, and $\la=\nu_p(a_{l_1}(X))/l_1=h/e$. For every $i=0,\dots,d$, $$A_{\l_2+ie}(X)/p^{ih}\equiv
R_{\l_2+ie}(X)/p^{ih}+Q_{\l_2+ie+1}(X)/p^{ih}\equiv
R_{\l_2+ie}(X)/p^{ih} \;\;(\mbox{mod} \,p)$$ (because $\nu_p(Q_{l_2+ie+1}(X))>ie\lambda=ih$). Moreover, $$R_{\l_2+ie}(X)/p^{ih}\equiv
c_{\l_2+ie}(X)/p^{ih}-\ph(X)Q_{\l_2+ie}(X)/p^{ih}
\equiv c_{\l_2+ie}(X)/p^{ih}\;\;(\mbox{mod}\,(p,\ph)).$$ So, $A_{\l_2+ie}(X)/p^{ih}\equiv
c_{\l_2+ie}(X)/p^{ih}$ $(\mbox{mod}\,(p,\ph))$. Using the expression
of $c_{\l_2+ie}(X)$, we have $c_{\l_2+ie}(X)/p^{ih}\equiv
b_{\l_2}(X)a_{ie}(X)/p^{ih}$ $(\mbox{mod}\, p)$ and
$A_{\l_2+ie}(X)/p^{ih} \equiv b_{\l_2} a_{ie}(X)/p^{ih}$ $(\mbox{mod}\,(p,\ph))$. Therefore, $(fg)_{\ph}(Y)=c f_{\ph}(Y)$, where $c=\rd(b_{\l_2}(X))$.
\end{proof}

\begin{lem}\label{prod}
If $\overline{fg}(X)$ is a power of $\overline{\phi}(X)$ modulo $p$
and $\overline{\phi}(X)$ is irreducible, then $fg$ satisfies the
$L_{\phi, \lambda}$ property if and only if both $f$ and $g$
satisfy the same $L_{\phi, \lambda}$ property, where $\lambda=\lambda_{fg}=\lambda_f=\lambda_g$. Moreover,
$(fg)_\phi (Y)= f_\phi(Y) g_\phi(Y)$ if either of these two
equivalent conditions holds.
\end{lem}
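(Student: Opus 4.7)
The plan is to relate the actual $\phi$-adic coefficients $A_k$ of $fg$ (produced by Lemma~\ref{expprod}) to the convolution coefficients $c_k=\sum_{i+j=k}a_i(X)b_{k-i}(X)$, exploiting that Gauss's lemma makes $\nu_p$ multiplicative on $\mathbb{Z}_p[X]$, so that the Newton polygon read off the $c_k$'s is the Minkowski sum of the $a$- and $b$-polygons. Since $\overline{\phi}$ is irreducible and $\overline{fg}=\overline{\phi}^l$, unique factorization in $\mathbb{F}_p[X]$ forces $\overline{f},\overline{g}$ to be powers of $\overline{\phi}$, so $s_f=s_g=s_{fg}=0$ and $a_0=b_0=1$. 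A short argument, using $\deg(a_{l_1}),\deg(b_{l_2})<\deg\phi$ and the irreducibility of $\overline{\phi}$, gives $\nu_p(A_l)=\nu_p(c_l)=\nu_p(a_{l_1})+\nu_p(b_{l_2})$, so $\lambda_{fg}=(l_1\lambda_f+l_2\lambda_g)/l$. The forward direction is then immediate: if $\lambda_f=\lambda_g=\lambda$, the ultrametric inequality yields $\nu_p(c_k)\ge k\lambda$, whence the Remark after Lemma~\ref{expprod} gives $\nu_p(Q_k),\nu_p(R_k)\ge k\lambda$, so that $\nu_p(A_k)\ge\min(\nu_p(R_k),\nu_p(Q_{k+1}))\ge k\lambda$ and $fg$ satisfies $L_{\phi,\lambda}$ with $\lambda_{fg}=\lambda$.

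The main obstacle is the converse direction. Assuming $fg$ satisfies $L_{\phi,\lambda}$ with $\lambda_{fg}=\lambda$, I plan to show $\nu_p(c_k)\ge k\lambda$ for every $k$; the Minkowski-sum structure will then force the polygons $P_a,P_b$ of $f$ and $g$ both to be single segments of slope $\lambda$, giving $\lambda_f=\lambda_g=\lambda$ and $L_{\phi,\lambda}$ for both $f$ and $g$. Arguing by contradiction, suppose the polygon $P_c$ of the $c_k$'s has first slope $\sigma_1<\lambda$ and first vertex $(k_1,k_1\sigma_1)$. On the one hand, the Minkowski-sum description of $c_{k_1}=\sum a_ib_{k_1-i}$ shows that $\overline{c_{k_1}/p^{k_1\sigma_1}}$ equals a product of reductions of certain $a_i,b_j$ each of degree less than $\deg\overline{\phi}$, hence not divisible by $\overline{\phi}$. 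On the other hand, the identity $c_{k_1}-A_{k_1}=\phi Q_{k_1}-Q_{k_1+1}$, together with $\nu_p(A_{k_1})\ge k_1\lambda>k_1\sigma_1$ and $\nu_p(Q_{k_1+1})>k_1\sigma_1$ (which follows from the next polygon slope $\sigma_2>\sigma_1$), forces $\nu_p(Q_{k_1})=k_1\sigma_1$ and $\nu_p(R_{k_1})>k_1\sigma_1$, making $\overline{c_{k_1}/p^{k_1\sigma_1}}\equiv\overline{\phi}\cdot\overline{Q_{k_1}/p^{k_1\sigma_1}}\pmod p$ \emph{divisible} by $\overline{\phi}$. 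The contradiction completes the converse.

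For the residual polynomial identity $(fg)_\phi(Y)=f_\phi(Y)g_\phi(Y)$, write $\lambda=h/e$ in lowest terms, so $d_{fg}=d_f+d_g$. Integer-valued bounds give $\nu_p(Q_{ke+1})\ge kh+1$ and $\nu_p(\phi Q_{ke})\ge kh$, whence $\mbox{red}(A_{ke}/p^{kh})=\mbox{red}(c_{ke}/p^{kh})$ in $\mathbb{F}_\phi$. Expanding $c_{ke}=\sum_{i+j=ke}a_ib_j$ modulo $p^{kh+1}$, only the on-polygon terms with $i=\alpha e$, $j=\beta e$, $\alpha+\beta=k$, survive, producing exactly $\sum_{\alpha+\beta=k}t_\alpha t'_\beta$---the $k$-th coefficient of $F_\phi(Y)G_\phi(Y)$.
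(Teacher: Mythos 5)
Your proposal is correct. Its forward direction and the residual-polynomial identity coincide with the paper's proof: the endpoint computation $\nu_p(A_l)=\nu_p(R_l)=\nu_p(c_l)=\nu_p(a_{l_1})+\nu_p(b_{l_2})$ (using $c_l=a_{l_1}b_{l_2}$, the degree bounds $<\deg\phi$, and irreducibility of $\overline{\phi}$), and the chain $A_{ke}/p^{kh}\equiv R_{ke}/p^{kh}\equiv c_{ke}/p^{kh} \pmod{(p,\phi)}$ followed by discarding off-lattice terms of the convolution, are exactly the paper's steps. Where you genuinely diverge is the converse. The paper works directly with the two factors: it sets $\lambda_1=\min_i\nu_p(a_i(X))/i$ and $\lambda_2=\min_j\nu_p(b_j(X))/j$, rules out $\lambda_1\neq\lambda_2$ by evaluating $c_{i_0}$ at an extremal index and comparing against the endpoint $c_l$, then forces $\lambda=\lambda_1=\lambda_2$ by a second extremal-index argument at $c_{i_0+j_0}$. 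You instead prove the single inequality $\nu_p(c_k)\geq k\lambda$ by a localized contradiction at the first vertex $(k_1,k_1\sigma_1)$ of the polygon of the $c_k$'s --- the carrying identities force $\nu_p(Q_{k_1})=k_1\sigma_1$ and hence $\overline{\phi}\mid\overline{c_{k_1}/p^{k_1\sigma_1}}$, against the vertex reduction being a product of two nonzero polynomials of degree $<\deg\overline{\phi}$ --- and then invoke additivity of $\phi$-Newton polygons under Minkowski sum (legitimate here since the Gauss valuation is multiplicative) to split the resulting single segment of slope $\lambda$ into single segments for $f$ and $g$. This buys a cleaner conceptual picture: it isolates the one real difficulty of the lemma (that Euclidean-division carrying from $c_k$ to $A_k$ cannot hide a low vertex) in one argument, and it yields the general formula $\lambda_{fg}=(l_1\lambda_f+l_2\lambda_g)/l$, which the paper never states; the paper's route is more hands-on and avoids needing polygon additivity (in particular, uniqueness of the minimizing pair at a Minkowski-sum vertex) as a black box, proving the two instances it needs by direct index-chasing. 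When writing yours up, do make explicit that vertex heights such as $k_1\sigma_1$ are integers (they equal $\nu_p(c_{k_1})$, so dividing by $p^{k_1\sigma_1}$ is legitimate), that $k_1<l$ so the next slope $\sigma_2$ exists (because $\nu_p(c_l)=l\lambda>l\sigma_1$), and fix the slip at the end: the product should read $f_\phi(Y)g_\phi(Y)$, not $F_\phi(Y)G_\phi(Y)$.
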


\begin{proof}
Consider the $\ph$-adic expansions of $f$, $g$, and $fg$ as in Lemma \ref{expprod}. Assume that  $fg(X)$ satisfies the $L_{\ph,\la}$ property, where $\lambda=\nu_p(A_l(X))/l$. Since $f(X)$, $g(X)$, and $\ph(X)$ are monic, $\nu_p(a_0(X))=\nu_p(b_0(X))=0$. Let $\la_1=\mbox{min}\{\nu_p(a_i(X))/i\, | \, i=1, \dots, l_1\}$ and $\la_2= \mbox{min}\{\nu_p(b_i(X))/i\, |\, i=1, \dots, l_2\}$. We claim that $\lambda=\lambda_1=\lambda_2$. Suppose that $\lambda_1<\lambda_2$, and let $i_0=\mbox{min}\{\,i\,|\, \nu_p(a_i(X))/i=\la_1\}$. Consider $c_{i_0}(X)=a_{i_0}(X)b_0(X)+\cdots+a_{0}(X)b_{i_0}(X)$. Then $\nu_p(a_{i_0}(X)b_0(X))=\nu_p(a_{i_0}(X))+\nu_p(b_0(X))=i_0\lambda_1$ and, for $j=1, \dots, i_0$, $$\nu_p(a_{i_0 -j}(X)b_j(X))=\nu_p(a_{i_0-j}(X))+\nu_p(b_j(X))\geq(i_0-j)\lambda_1 +j\lambda_2 > i_0\lambda_1$$ (as $\lambda_2 >\lambda_1$). So $\nu_p(c_{i_0}(X))=i_0\la_1$. Now $\overline{\phi}(X) \nmid (\overline{a_{i_0}}(X)\overline{b_0}(X)/p^{i_0\lambda_1})$ because \linebreak $\nu_p(a_{i_0}(X))=i_0\la_1$, $\overline{\phi}(X) \nmid (\overline{a_{i_0}}(X)/p^{i_0 \lambda_1})$, $\overline{\phi}(X) \nmid \overline{b_0}(X)$, and $\overline{\phi}(X)$ is irreducible. So $\overline{\phi}(X) \nmid (\overline{c_{i_0}}(X)/p^{i_0\lambda_1})$ and, thus, $R_{i_0}(X)/p^{i_0\lambda_1} \not\equiv 0 \;\; (\mbox{mod}\,p)$. Therefore, $\nu_p(R_{i_0}(X))=i_0\lambda_1$. Now, for $i=1, \dots, l_1$, $c_i(X)=a_i(X)b_0(X) + \sum_{j=1}^i a_j(X) b_{i-j}(X)$. As, for $1\leq j\leq i$, $$\nu_p(a_j(X) b_{i-j}(X) \geq j\la_1+(i-j)\la_2 \geq j\la_1 +(i-j)\la_1 =i\la_1,$$ we have $\nu_p(c_i(X)) \geq \nu_p(a_i(X) b_0(X)) \geq i\la_1$. Thus, $\nu_p(Q_{i_0 +1}(X)) \geq (i_0 +1) \lambda_1$. Moreover, since $A_{i_0}(X) = R_{i_0}(X) + Q_{i_0 +1}(X)$, $\nu_p(A_{i_0}(X)) = \nu_p(R_{i_0}(X))=i_0\lambda_1$. So $i_0\lambda_1\geq i_0\lambda$ and $\lambda_1 \geq \lambda$. We now have $$\nu_p(c_l(X))=\nu_p(a_{l_1}(X))+\nu_p(b_{l_2}(X))\geq l_1\lambda_1 +l_2\lambda_2 > l_1\lambda_1+l_2\lambda_1=l\lambda_1\geq l \lambda.$$ It then follows from $\nu_p(c_l(X))=\mbox{min}\{\nu_p(R_l(X)), \nu_p(Q_l(X))\} >l\lambda$ and $A_l(X) = R_l(X)$ that $\nu_p(A_l(X))>l\lambda$. This is a contradiction, as $fg(X)$ satisfies the $L_{\phi, \lambda}$ property. Similarly, assuming $\lambda_1 <\lambda_2$ would yield a contradiction. Hence $\lambda_1=\lambda_2$.

We now show that $\lambda=\lambda_1=\lambda_2$. Let $$i_0=\mbox{max}\{\, i \,|\, \nu_p(a_i(X))/i=\la_1\}\;\; \mbox{and}\;\;
j_0=\mbox{max}\{\, j\,|\,\nu_p(b_j(X))/j=\la_2\}.$$ Consider $c_{i_0+j_0}(X)=a_{0}(X)b_{i_0+j_0}(X)+\dots+a_{i_0}(X) b_{j_0}(X)+\dots+a_{i_0+j_0}(X)b_0(X)$. It then follows that $$\nu_p(c_{i_0+j_0}(X))=\nu_p(a_{i_0}(X)b_{j_0}(X))=i_0\la_1+j_0\la_2=(i_0+j_0)\la_1$$ and that $(i_0+j_0)\la_1(= \nu_p(A_{i_0+j_0}(X)))$ is an integer. Dividing now by $p^{(i_0+j_0)\la_1}$ yields \linebreak $\nu_p(R_{i_0+j_0}(X))=(i_0+j_0)\la_1$ and $\nu_p(A_{i_0+j_0}(X))=(i_0+j_0)\la_1$. But as $fg$ satisfies the $L_{\ph, \la}$ property, we have $\nu_p(A_{i_0+j_0}(X))\geq(i_0+j_0)\la$. So, $\la\le \la_1$. On the other hand, as $l\la=\nu_p(A_l(X))=\nu_p(R_l(X))=\nu_p(c_l(X))=\nu_p(a_{l_1}(X))+\nu_p(b_{l_2}(X))\geq l_1\la_1+l_2\la_2=l\la_1$, we have $\la= \la_1$ as desired.

Now, we have $\nu_p(a_{l_1}(X))+\nu_p(b_{l_2}(X))=l\la=l_1 \la + l_2 \la=l_1\la_1+l_2\la_2$, $\nu_p(a_{l_1}(X))\geq l_1 \la_1$, and $\nu_p(b_{l_2}(X))\geq l_2\la_2$. Thus $\nu_p(a_{l_1}(X))=l_1\la_1=l_1 \la$ and $\nu_p(b_{l_2}(X))=l_2\la_2=l_2 \la$. Hence both $f$ and $g$ satisfy the $L_{\phi, \la}$ property.

Conversely, assume that $f$ and $g$ satisfy the same condition $L_{\ph,\lambda}$. Then  for every $i$,  $\nu_p(a_i(X))\ge i\lambda $ and $\nu_p(b_i(X))\ge i\lambda $. Let $i:=1,\dots, l-1$. Since $c_i(X)=\sum_{k=1}^ia_k(X)b_{i-k}(X)$, $\nu_p(c_i(X))\ge (k+i-k)\lambda\ge i\lambda$. Now, the facts that $A_i(X)=R_i(X)+Q_{i+1}(X)$,\linebreak $\nu_p(R_i(X))\ge i\lambda$, and $\nu_p(Q_{i+1}(X))\ge( i+1)\lambda$ imply that $\nu_p(A_i(X))\ge i\lambda$. As for $i=l$, we have $\nu_p(c_l(X))=\nu_p(a_{l_1}(X))+\nu_p(b_{l_2}(X))=(l_1+l_2)\lambda = l\lambda$. Moreover, since $\overline{\ph}(X)$ does not divide $\overline{c_l}(X)/p^{\nu_p(c_l(X))}$, the remainder $R_l(X)/p^{\nu_p(c_l(X))} \not\equiv 0 \;\; (\mbox{mod}\,p)$  and $$\nu_p(A_l(X))= \nu_p(R_l(X))=\nu_p(c_l(X))=l\lambda.$$ Hence, $fg$ satisfies the same $L_{\ph,\lambda}$ property.

As in the proof of Lemma \ref{slopezero}, since $\nu_p(Q_{ei+1}(X))> ei\la \geq ih$ for each  $i=0,\dots,d$, it \linebreak follows that
$$A_{ei}(X)/p^{ih} \equiv R_{ei}(X)/p^{ih} \equiv  c_{ei}(X)/p^{ih} \equiv
\sum_{k+j=i} a_k(X) b_j(X)/p^{kh+jh} \;\; (\mbox{mod}\,(p,\ph)).$$ But as $a_k(X)/p^{k\la} \, b_j(X)/p^{ie\la} \equiv 0 \;\; (\mbox{mod}\,p)$ when $k$ or $j$ is not a multiple of $e$, we have $A_{ei}(X)/p^{i\la} \equiv c_{ei}(X)/p^{ie\la} \equiv \sum_{k+j=i} a_{ek}(X)/p^{kh}\, b_{ej}(X)/p^{jh} \;\; (\mbox{mod}\,(p, \ph))$. This yields \linebreak $(fg)_{\ph}(Y)={f}_{\ph}(Y){g}_{\ph}(Y)$.
 \end{proof}

\begin{lem}\label{irred}
Let $\overline{\ph}(X)$ be irreducible modulo $p$, $\overline{f}(X)$ congruent to a power of $\overline{\ph}(X)$, and $f(X)$ satisfies the $L_{\ph,\la}$ property. If ${f_{\ph}}(Y)$ is irreducible in $\fph[Y]$, then  $f(X)$ is irreducible in $\z_p[X]$.
\end{lem}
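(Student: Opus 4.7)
My approach is by contrapositive: I assume $f(X)$ is reducible in $\z_p[X]$ and build a nontrivial factorization of $f_{\ph}(Y)$ in $\fph[Y]$, contradicting its irreducibility. So suppose $f = gh$ with $g, h \in \z_p[X]$ monic of positive degree (after absorbing a unit, which is legitimate because $f$ is monic and a factorization over $\z_p$ or $\q_p$ can always be normalized in this way).

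Step~1 controls the reductions. Since $\overline{f}(X)$ is a power of the irreducible polynomial $\overline{\ph}(X)$, unique factorization in $\fp[X]$ forces $\overline{g}(X)$ and $\overline{h}(X)$ to be powers of $\overline{\ph}(X)$; because $g$ and $h$ are monic of positive degree, so are their reductions, so we obtain $\overline{g} = \overline{\ph}^{m_1}$ and $\overline{h} = \overline{\ph}^{m_2}$ with $m_1, m_2 \geq 1$.

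Step~2 invokes Lemma~\ref{prod} applied to the factorization $f = gh$: its hypotheses (namely, $\overline{gh}$ is a power of $\overline{\ph}$, $\overline{\ph}$ is irreducible, and $f$ satisfies $L_{\ph,\la}$) are all satisfied. Thus both $g$ and $h$ satisfy $L_{\ph,\la}$ with the same slope $\la = \la_g = \la_h$, and we obtain the product formula $f_{\ph}(Y) = g_{\ph}(Y)\, h_{\ph}(Y)$ in $\fph[Y]$.

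Step~3, the main subtlety, is to verify that neither $g_{\ph}$ nor $h_{\ph}$ is a constant. Writing the $\ph$-adic expansion $g(X) = \sum_{i=0}^{m_1} b_i(X)\ph(X)^{m_1-i}$, monicity gives $b_0 = 1$ and $\overline{g} = \overline{\ph}^{m_1}$ forces $\nu_p(b_i) \geq 1$ for $i \geq 1$, so $s_g = 0$. By Lemma~\ref{prod}, $\nu_p(b_{m_1}(X)) = m_1 \la_g = m_1 \la$, which is a positive integer (note that $\la > 0$ because we are in Case~1, $\overline{\ph}\mid \overline{f}$). Hence $\deg g_{\ph} = \gcd(\nu_p(b_{m_1}),m_1) \geq 1$, and similarly $\deg h_{\ph} \geq 1$. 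Thus $f_{\ph}(Y) = g_{\ph}(Y)h_{\ph}(Y)$ is a nontrivial factorization in $\fph[Y]$, contradicting the irreducibility of $f_{\ph}$. The hard point is really this final degree check for the residual factors; it is controlled precisely by the slope rigidity ($\la_g = \la_h = \la$) supplied by Lemma~\ref{prod}, without which the factorization of $f_\ph$ could degenerate.
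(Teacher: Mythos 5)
Your proof is correct and takes essentially the same route as the paper: suppose $f=gh$ with monic nonconstant factors, apply Lemma~\ref{prod} to conclude that $g$ and $h$ satisfy the same $L_{\ph,\la}$ property and that $f_{\ph}(Y)=g_{\ph}(Y)h_{\ph}(Y)$, then contradict the irreducibility of $f_{\ph}(Y)$. Your Step~3 simply spells out what the paper asserts in one clause (that $g_{\ph}$ and $h_{\ph}$ are nonconstant, via $\deg g_{\ph}=\gcd(\nu_p(b_{m_1}(X)),m_1)\ge 1$ with $m_1\ge 1$ and $\la>0$), which is extra rigor on the same argument rather than a different approach.
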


\begin{proof} Suppose that $f(X)$ factors in $\z_p[X]$ as $f(X)=g(X)h(X)$ such that ${g}(X)$ and
${h}(X)$ are nonconstant. Then by Lemma \ref{prod}, both of
${g}(X)$ and ${h}(X)$ satisfy the same condition $L_{\ph,\la}$ and
${f}_{\ph}(Y)={g}_{\ph}(Y){h}_{\ph}(Y)$, where ${g}_{\ph}(Y)$ and
${h}_{\ph}(Y)$ are nonconstant as their respective degrees are positive. So $f_{\ph}(Y)$ is  not
irreducible in $\fph[Y]$, a contradiction.
 \end{proof}

\begin{lem}\label{monicirred}

Let $f(X)$ be irreducible. Then $\overline{f}(X)$ is a power of an irreducible polynomial
$\overline{\phi}(X)$ modulo $p$ and $f(X)$ satisfies the $L_{\phi,\lambda}$ property.

\end{lem}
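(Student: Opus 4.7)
The plan is to establish the two conclusions separately: first that $\overline{f}(X)$ is a power of a single irreducible polynomial modulo $p$, and then that $f(X)$ satisfies the $L_{\phi,\lambda}$ property for a chosen monic lift $\phi$. The first conclusion is a direct application of Hensel's lemma for the Henselian ring $\z_p$: if $\overline{f}(X)$ admitted a coprime factorization $\overline{g}(X)\overline{h}(X)$ in $\fp[X]$, Hensel would lift this to a proper factorization of $f$ in $\z_p[X]$, contradicting irreducibility. Hence $\overline{f}(X)=\overline{\phi}(X)^{l}$ for some monic irreducible $\overline{\phi}\in\fp[X]$, and I fix any monic lift $\phi(X)\in\z_p[X]$.

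For the $L_{\phi,\lambda}$ property, write the $\phi$-adic expansion $f(X)=\sum_{i=0}^{l}a_i(X)\phi(X)^{l-i}$ with $a_0=1$. Because $\overline{f}=\overline{\phi}^l$, every $a_i$ with $i\geq 1$ has $\nu_p(a_i)>0$, so $s_f=0$ and $\lambda=\nu_p(a_l)/l$. Let $\alpha_1,\dots,\alpha_{lm}$ be the roots of $f$ in $\qpb$ (where $m=\deg\phi$), and let $w$ be the unique extension of $\nu_p$ to $\qpb$, which exists since $\q_p$ is Henselian. Since $f$ is monic irreducible (hence irreducible in $\q_p[X]$ by Gauss), and $w$ is invariant under $\op{Gal}(\qpb/\q_p)$ by uniqueness, the value $\mu:=w(\phi(\alpha_j))$ is independent of $j$. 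The resultant identity $\prod_j \phi(\alpha_j)=\pm\prod_\beta f(\beta)=\pm\prod_\beta a_l(\beta)$, where $\beta$ runs over the $m$ roots of $\phi$ and $w(a_l(\beta))=\nu_p(a_l)$ because the primitive part of $a_l$ has degree strictly less than $\deg\phi$ and so cannot vanish at any root of the irreducible $\overline{\phi}$, yields $lm\mu=m\nu_p(a_l)$, hence $\mu=\lambda$.

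To conclude $\nu_p(a_i)\geq i\lambda$ for all $i$, I would factor $f$ over the unramified extension $\z_p[\beta]$ (with $\beta$ a root of $\phi$) via Hensel's lemma as $f=\prod_\sigma f_\sigma$, one factor per root $\sigma(\beta)$ of $\phi$. Each $f_\sigma$ is irreducible of degree $l$ over $\q_p(\beta)$ with all its roots sharing the residue $\overline{\sigma(\beta)}$, so the ordinary one-sided Newton polygon theorem, applied to $f_\sigma$ in the variable $X-\sigma(\beta)$ over the Henselian base $\q_p(\beta)$, produces a one-slope polygon for $f_\sigma$ of slope $\lambda$. Reassembling the product via the analogue of Lemma \ref{prod} in this unramified setting translates these factor-wise bounds back into $\nu_p(a_i)\geq i\lambda$, which is exactly the $L_{\phi,\lambda}$ property. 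The main technical obstacle I foresee is precisely this last translation step — rigorously linking the one-slope root-valuation data over the unramified extension back to the $\phi$-adic coefficient bounds over $\z_p$, essentially the content of Ore's lemma — which requires careful tracking of the interaction between the two polynomial expansions.
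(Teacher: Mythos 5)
Your opening step (Hensel's lemma forcing $\overline{f}=\overline{\phi}^{\,l}$ for a single irreducible $\overline{\phi}$) is exactly the paper's, and your resultant computation pinning down $w(\phi(\alpha_j))=\lambda$ for every root $\alpha_j$ is correct — including the key observation that $w(a_l(\beta))=\nu_p(a_l(X))$ because $\overline{a_l}(X)/p^{\nu_p(a_l(X))}$ is a nonzero polynomial of degree $<\deg\phi$ and hence cannot vanish at $\overline{\beta}$, whose minimal polynomial over $\mathbb{F}_p$ is $\overline{\phi}$. The genuine gap is the final step, which you yourself flag: passing from the one-slope root data of the factors $f_\sigma$ over $\mathbb{Z}_p[\beta]$ back to the coefficient bounds $\nu_p(a_i(X))\geq i\lambda$ in the $\phi$-adic expansion over $\mathbb{Z}_p$. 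This is not covered by ``the analogue of Lemma \ref{prod} in this unramified setting'': Lemma \ref{prod} compares $\phi$-adic expansions of two factors over $\mathbb{Z}_p$ with respect to one fixed $\phi$ with irreducible reduction, whereas you must relate the $m$ different $(X-\sigma(\beta))$-adic expansions over the unramified ring $\mathbb{Z}_p[\beta]$ to the $\phi$-expansion downstairs. That translation is precisely Ore's theorem of the polygon; invoking it as ``essentially the content of Ore's lemma'' without proof leaves the hardest part of the statement unproven, in a paper whose purpose is to establish such facts from scratch. (A smaller unjustified claim: that each Hensel factor $f_\sigma$ is irreducible of degree $l$ over $\mathbb{Q}_p(\beta)$. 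This is true — the residue field of $\mathbb{Q}_p(\alpha)$ contains $\mathbb{F}_{p^m}$, so $\mathbb{Q}_p(\beta)\subseteq\mathbb{Q}_p(\alpha)$ and $[\mathbb{Q}_p(\beta)(\alpha):\mathbb{Q}_p(\beta)]=l$, while the unramified Galois action makes all irreducible factors of $f$ over $\mathbb{Q}_p(\beta)$ have equal degree — but it needs saying.)

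The paper closes exactly this gap by a device that avoids base change altogether: let $g(X)=X^{n}+b_1X^{n-1}+\cdots+b_{n}\in\mathbb{Z}_p[X]$ be the minimal polynomial of $\phi(\alpha)$ over $\mathbb{Q}_p$. Galois invariance of the valuation and the symmetric-function formulas give $\nu_p(b_n)=n\lambda_g$ and $\nu_p(b_i)\geq i\lambda_g$, i.e.\ $g$ satisfies the $L_{X,\lambda_g}$ property; then $G(X)=g(\phi(X))$ satisfies $L_{\phi,\lambda_g}$ \emph{tautologically}, because its $\phi$-adic coefficients are the constants $b_i$ (of degree $0<\deg\phi$). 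Since $G(\alpha)=0$, $f$ divides $G$ in $\mathbb{Z}_p[X]$ and $\overline{G}=\overline{\phi}^{\,n}$, so the already-proven Lemma \ref{prod}, applied to the factorization $G=f\cdot(G/f)$ over $\mathbb{Z}_p$ itself, descends the $L_{\phi,\lambda_g}$ property to $f$ and identifies $\lambda_g=\lambda_f=\lambda$. If you replace your unramified-descent step with this composition trick, your argument closes; your resultant computation then becomes redundant, since Lemma \ref{prod} delivers the slope equality for free.
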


\begin{proof}
Suppose that  $\overline{f}(X) = \overline{F_1}(X)\,
\overline{F_2}(X)$ modulo $p$ such that $\overline{F_1}(X)$ and $\overline{F_2}(X)$
are nonconstant polynomials coprime modulo $p$. Then by Hensel's lemma, $f(X)=f_1(X)\, f_2(X)$ in $\z_p[X]$ where $f_i$ is a left of $F_i$ for
$i=1,2$. This contradicts the irreducibility of $f(X)$ in $\z_p[X]$. Therefore,
$\overline{f}(X)$ is congruent to a power of an irreducible polynomial $\overline{\phi}(X)$ modulo $p$. Let $\alpha$ be a root of $f(X)$ in some extension of $\mathbb{Q}_p$, $g(X)=X^{n}+b_1X^{n-1}+\cdots+b_{n}\in \z_p[X]$ the minimal polynomial of $\ph(\al)$ over $\q_p$, and $G(X)=g(\ph(X)) \in \z_p[X]$. Let $L$ be the splitting field of $g(X)$, $\z_L$ its ring of
 integers, $\p$ the maximal ideal of $\z_L$, and $u=\nu_{\p}(\ph(\al))$. Using
the symmetric formulas relating the roots and coefficients of $g(X)$ and the fact that the $\p$-adic valuation is invariant
under Galois actions, we have $\nu_{\p}(b_{n})=nu$ and $\nu_{\p}(b_i)\ge iu$ for every
$i=1,\dots,{n}-1$. Setting $\la_g=\nu_{p}(b_{n})/n$, we see that $\la_g=u/e(\p)$ as $\nu_{\p}(b_n)=\nu_p(b_n) e(\p)$, where $e(\p)$ is the ramification index of $\p$ over $p$. So $\nu_{p}(b_i)\ge i(u/e(\p))\ge i\la_g$ for every $i=1,\dots,{n}-1$. It follows that $g(X)$ satisfies the $L_{X,\la_g}$ property, and so does $G(X)$. Furthermore, $\overline{G}(X)$ is congruent to $\overline{\phi}(X)^n$ modulo $p$. Since $G(\al)=0$, $f(X)$ divides
$G(X)$ over $\mathbb{Z}_p$. It then follows from Lemma \ref{prod} that $f(X)$ satisfies the property $L_{\ph,\la_g}$ as well and $\la_g=\la$.
\end{proof}

\begin{lem}\label{valuation}
Let $f(X)$ be irreducible, $\al$ a root of $f(X)$, $\mathbb{K}=\mathbb{Q}_p(\al)$, $\z_\mathbb{K}$ the ring of integers of $\mathbb{K}$, and $\p$ the maximal ideal of $\z_\K$. Assume that $\overline{\ph}(X)$ is irreducible modulo $p$ such that $\overline{f}(X)$ is a power of $\overline{\ph}(X)$. Then there is only one valuation $w$ of $\mathbb{K}$ extending $\nu_p$, and, for every $P(X)\in \z_p[X]$, $w(P(\al)) \geq \nu_p(P(X))$, where equality holds if and only if $\overline{\ph}(X) \nmid (\overline{P}(X)/p^v)$ modulo $p$, where $v=\nu_p(P(X))$.
\end{lem}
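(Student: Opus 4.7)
The plan splits into three pieces: uniqueness of the extension $w$, the inequality $w(P(\al))\ge \nu_p(P(X))$, and the equality case.

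For uniqueness I would just invoke the standard fact that $\q_p$ is complete (hence Henselian) with respect to $\nu_p$, so every finite extension of $\q_p$ carries a unique valuation extending $\nu_p$. Since $\K=\q_p(\al)$ is such an extension, there is a unique $w$ on $\K$ extending $\nu_p$, and I normalize it so that $w(p)=1$ (which is what is needed for the inequality $w(P(\al))\ge \nu_p(P(X))$ to make sense as stated).

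For the inequality, I would write $v=\nu_p(P(X))$ and $P(X)=p^v P_0(X)$ with $P_0(X)\in \z_p[X]$ and $\nu_p(P_0(X))=0$. Since $f(X)$ is monic in $\z_p[X]$, its root $\al$ is integral over $\z_p$ and hence lies in $\z_\K$, so $P_0(\al)\in \z_\K$. This gives $w(P_0(\al))\ge 0$, and therefore $w(P(\al))=v\,w(p)+w(P_0(\al))=v+w(P_0(\al))\ge v=\nu_p(P(X))$.

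The equality case is the heart of the argument. The key observation is that $\overline{\ph}(X)$ is the minimal polynomial over $\F_p$ of the reduction $\overline{\al}\in\z_\K/\p$: indeed $\overline{f}(\overline{\al})=0$ combined with the hypothesis $\overline{f}(X)=\overline{\ph}(X)^{l_1}$ forces $\overline{\ph}(\overline{\al})^{l_1}=0$ in the field $\z_\K/\p$, hence $\overline{\ph}(\overline{\al})=0$, and $\overline{\ph}(X)$ is irreducible by assumption. Then $w(P(\al))=v$ iff $w(P_0(\al))=0$ iff $P_0(\al)$ is a unit of $\z_\K$ iff $\overline{P_0(\al)}\neq\overline{0}$ in $\z_\K/\p$; and since reduction modulo $\p$ commutes with polynomial evaluation on elements of $\z_\K$, $\overline{P_0(\al)}=\overline{P_0}(\overline{\al})$, which by the minimal-polynomial property is nonzero exactly when $\overline{\ph}(X)\nmid \overline{P_0}(X)=\overline{P}(X)/p^v$ in $\F_p[X]$. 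There is no real obstacle here; the only point demanding care is the identification of $\overline{\ph}(X)$ as the minimal polynomial of $\overline{\al}$, which genuinely uses both the irreducibility of $\overline{\ph}(X)$ and the assumption that $\overline{f}(X)$ is a power of $\overline{\ph}(X)$.
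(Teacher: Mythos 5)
Your proposal is correct and follows essentially the same route as the paper: the inequality comes from $\al\in\z_\K$ making $P(\al)/p^v$ integral, and the equality criterion comes from identifying $\overline{\ph}(X)$ as the minimal polynomial of $\overline{\al}$ over $\F_p$ (the paper packages this as computing the kernel of the evaluation map $\F_p[X]\to\z_\K/\p$, while your uniqueness step invokes completeness of $\q_p$ rather than the paper's Lemma on Henselizations --- an equivalent appeal). If anything, you are slightly more thorough: you justify why $\overline{\ph}(X)$ is the minimal polynomial of $\overline{\al}$ and prove both directions of the equality criterion, where the paper only writes out the implication from strict inequality to divisibility.
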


\begin{proof}
Since $f(X)$ is irreducible over $\q_p$, Lemma \ref{tool} implies that there is only one valuation $w$ of $\mathbb{K}$ extending $\nu_p$. Consider the $\F_p$-algebra homomorphism $\psi: \F_p[X] \to \z_\mathbb{K}/\p$ defined by $\overline{g}(X) \mapsto \overline{g(\al)}$. Since $\overline{\ph}(X)$ is the minimal polynomial of $\overline{\al}$ over $\F_p$, $\mbox{Ker}\psi$ is the principal ideal of $\F_p[X]$ generated by $\overline{\ph}(X)$. Now for $P(X)\in \z_p[X]$, it is clear that $P(X)/p^v \in \z_p[X]$, where $v=\nu_p(P(X))$. Since, further, $\al \in \z_\mathbb{K}$, $P(\al)/p^v \in \z_\mathbb{K}$. So $w(P(\al)/p^v)\ge 0$ and, thus, $w(P(\al)) \geq w(p^v) = v$. If  $w(P(\al)) > v$, then $P(\al)/p^v \equiv 0 \md{\p}$ and $P(\al)/p^v \in \mbox{Ker}\psi$, which implies that $\overline{\ph}(X)$ divides $\overline{P}(X)/p^v$.
\end{proof}

\section{Proof of Theorem \ref{Main 1}}\label{proofs}

\begin{proof}({\bf Theorem \ref{Main 1}})

It follows from Hensel's lemma that $F(X) = \prod_{i=1}^r f_i(X)$, where $f_i(X) \in \mathbb{Z}_p[X]$ are distinct monic polynomials and, for each $i=1, \dots, r$, $\overline{f_i}(X) = \overline{\phi_i}(X)^{l_i}$ modulo $p$ for some monic irreducible polynomial $\overline{\phi_i}(X)$. By Lemma \ref{tool}, in order to have exactly $r$  valuations of $K$ extending  $\nu_p$, it suffices to show that each $f_i(X)$ is irreducible over $\q_p$. If $l_i = 1$ for some $i$, then $\overline{f_i}(X)$ is irreducible modulo $p$ and, hence, $f_i(X)$ is irreducible over $\q_p$. On the other hand, if $l_i \geq 2$ for some $i$, then let $F(X) = f_i(X) g_i(X)$, where $g_i(X) = \prod_{j \neq i} f_j(X)$. Note that $\overline{\phi_i}(X)$ does not divide $\overline{g_i}(X)$, $\overline{f_i}(X)$ is a power of $\overline{\phi_i}(X)$, and $F(X)$ satisfies the $L_{\phi_i, \lambda_i}$ property. It thus follows from Lemma \ref{slopezero} that $f_i(X)$ satisfies the $L_{\phi_i, \lambda_i}$ property and ${f_i}_{\phi_i}(Y) = c_i F_{\phi_i}(Y)$ for some nonzero $c_i\in \mathbb{F}_{\phi_i}$. Since $F_{\phi_i}(Y)$ is irreducible, so is ${f_i}_{\phi_i}(Y)$. Then, by Lemma \ref{irred}, $f_i(X)$ is irreducible over $\q_p$. We, therefore, conclude that there are exactly $r$ valuations $w_1, \dots, w_r$ of $K$ extending $\nu_p$.

We now calculate the ramification index and residue degree of each $w_i$. To simplify notation, fix some $i = 1, \dots, r$ and set $f(X)=f_i(X)$, $\ph(X)=\ph_i(X)$, $l=l_i$, $m=m_i$, $d=d_i$, and $w=w_i$. Let $\al$ be a root of $f(X)$, $\K=\q_p(\al)$ the local field defined by $f(X)$, $\z_\K$ its ring of integers, and $\p$ the unique prime ideal of $\z_{\K}$. Since $\overline{f}(X)$ is congruent to $\overline{\ph}(X)^{l}$ modulo $p$, the $\ph$-adic expansion of $f(X)$ over $\z_p$ has the form $$f(X)=\ph(X)^{\l}+a_1(X)\ph(X)^{l-1}+\dots+ a_{l-1}(X)\phi(X)+a_{\l}(X),$$ where, for $j=1, \dots, l-1$, $\nu_p(a_j(X))>j\la$, $\la=v/l$, and $v=\nu_p(a_l(X))$. Set $\overline{\al}:=\al$ modulo $\p$. Since $\overline{\ph}(X)$ is the minimal polynomial of $\overline{\al}$ over $\mathbb{F}_p$ and $\overline{\ph}(X)$ does not divide $\overline{a_l}(X)/p^\nu$, it follows from Lemma \ref{valuation} that $v=w(a_l(\al))=l\la$. We now claim that $w(\ph(\al)) =  \la$. Note that as $f(X)$ satisfies the $L_{\phi, \la}$ property, $\nu_p(a_j(X))\geq j\la$ for $j=1, \dots, l-1$. So $w(a_j(\al)) \geq j\la$ (from Lemma \ref{valuation}). Thus, for $j=1, \dots, l-1$, $w(a_j(\al)\ph(\al)^{\l-j})\ge j\la +(\l-j)u$, where $u=w(\ph(\al))$. If $u\neq  \la$, then it follows from the $\ph$-adic expansion of $f(X)$ over $\z_p$ that $w(f(\al))=\mbox{min}(\l u,\l \la)$, which is impossible since $w(f(\al))=\infty$. Thus, $u=\la$ as claimed.

We now show that the value group of $w$ is $\z[\la]$, the subgroup of $\q$ generated by 1 and $\la$. For $P(X) \in \z[X]$, let the $\ph$-adic expansion of $P(X)$ be
$$P(X) = g_0(X) \ph^l(X)+ g_1(X)\ph^{l-1}(X) + \dots + g_{l-1}(X)\ph(X)+ g_l(X).$$
Since $w(\ph(\alpha))=\la$, it follows that, for each $j=0, \dots, l$, $w(g_j(\al)\ph^{l-j}(\al))=n_j+(l-j)\la$, where $n_j=w(g_j(\al))\in\z$. Thus, $w(P(\al)) \in \z[\la]$. As every element of $K$ is of the form $P(\al)/Q(\al)$ for some $P(X), Q(X) \in \z[X]$, the value group of $w$ is thus $\z[\la]$. So, the ramification index $e(w)$ is the index $[\z[\la]:\z]$, which is the order of $\la$ in $\q/\z$ and is equal to $e=l/d$. As $\nu_p$ is discrete, the residue degree is $f(w)=\mbox{deg}(f)/e(w)= lm / e(w) =m d$. The proof is now complete.

\end{proof}

\section{Remarks and Examples}\label{rk and ex}

\subsection{Remarks}\label{remarks}
\begin{enumerate}
\item
It is worthwhile to consider the possibility of a converse of Theorem \ref{Main 1}. Sub-remark (b) below shows that this is not possible. However, sub-remark (a) below further specifies which of the sufficient conditions is really necessary.

\begin{enumerate}
\item
If there exist exactly $r$ valuations $w_1, \dots, w_r$ of $K$ extending $\nu_p$, then $f_i(X)$ is irreducible in $\q_p[X]$ for every $i=1,\dots,r$. So, by Lemma \ref{monicirred}, $f_i$ satisfies the $L_{\ph_i,\la_i}$ property and, thus, so does $F$ (by Lemma \ref{slopezero}). This shows that in order to have exactly $r$  valuations $w_1, \dots, w_r$ of $K$ extending $\nu_p$, the requirement that $F(X)$ satisfies the $L_{\ph_i, \la_i}$ property for each $i=1, \dots, r$ is {\it necessary} for a possible converse of Theorem \ref{Main 1}.

\item
Let $a,b \in \z$ be such that $a\equiv 0 \md{16}$, $a \not\equiv 0 \md{32}$, and $b \equiv 4 \md{32}$. This in particular implies that $\nu_2(a)=4$ and $\nu_2(b)=2$. Assume further that the polynomial $F(X)=X^4+aX+b$ is irreducible over $\z$. Let $\al$ be a complex root of $F(X)$ and $K=\q(\al)$. Then $F(X)\equiv X^4 \md{2}$, $F(X)$ satisfies the $L_{X,1/2}$ property, and $F_X(Y)=(Y+1)^2$ (which is reducible over $\F_X \cong \F_2$). Let $A, B\in \z$ be such that $a=16A$ (so $2 \nmid A$) and $b=4+32B$. Set \linebreak $\epsilon=(\al^2+2\al+2)/4$, and let $l_\epsilon$ be the multiplication-by-$\epsilon$ endomorphism of $K$. Then $P(X)=X^4 -2X^3 +CX^2+DX+E$ is the characteristic polynomial of $l_\epsilon$, where $C=4B+6A+2$, $D=-(4A^2+4A+4B)$, and $E=4B^2-4AB+2A^2$. Since $2$ divides all the coefficients of $P(X)$ except the leading one and $2^2 \nmid E$, $P(X)$ is 2-Eisenstein and hence irreducible in $\z[X]$. So $\epsilon\in\z_K$ and $K=\q(\epsilon)$. \linebreak Now $P(X)\equiv X^4 \md{2}$ and $M(X)=(P(X)-X^4)/2$. So,

$M(X) =-X^3+(2B+3A+1)X^2-(2A^2+2A+2B)X+(2B^2-2AB+A^2)$. \linebreak Since $2\nmid A$, $A\equiv 1 \md2$ and  $M(X) \equiv -X^3+1 \md{2}$. So $X \nmid \overline{M}(X)$ modulo 2. Thus, by Dedekind's criterion, 2 does not divide $\mbox{ind}(\epsilon)$ and, therefore there is only one valuation $w$ of $K$ extending $\nu_2$ with ramification index $e(w)=4$ and residue degree $f(w)=1$. So, the conclusion of Theorem \ref{Main 1} holds despite that some $F_{\ph_i}(Y)$ is not irreducible. So the requirement that $F_{\ph_i}(Y)$ be irreducible for all $i=1, \dots, r$ is {\it not necessary} for a possible converse of Theorem \ref{Main 1}.

\end{enumerate}

\item
For every $i=1, \dots, r$, consider the $\ph_i$-adic expansion $$F(X)=a_{i,0}(X)\ph_i(X)^{L_i} + \dots + a_{i, L_i}(X).$$ Assume that $F(X)$ satisfies the
$L_{\ph_i,\la_i}$ property for every $i$, where $\la_i=\nu_{p}(a_{i, L_i}(X))/\l_i$. If $\l_i$ is the order of $\la_i$ in the group $\q/\z$, then $\nu_{p}(a_{i, L_i}(X))$ and $l_i$ are coprime. So $d_i=1$ and, thus, $F_{\ph_i}(Y)$ is irreducible (as it is linear). It follows that Theorem \ref{Main 1} generalizes  \cite[Theorem 1.1]{KK2} in the context of number fields.

\item
We give here an easier proof of \cite[Theorem 1.1]{KK1} in the case that the ring is $\z$. Assume that for every $i=1,\dots,r$, $\p_i=(p,\ph_i(\al))$, $f(\p_i)=\mbox{deg}(\ph_i)$, and $e(\p_i)= l_i$. We show that $p \nmid \mbox{ind}(\al)$.

On the one hand, if every $l_i=1$, then (by Dedekind's criterion) $p \nmid \mbox{ind}(\al)$. On the other hand, if $l_i\ge 2$ for some $i=1, \dots, r$, then it follows that $h_i:=\nu_{\p_i}(\ph_i(\al))=1$ (because $\ph_i(\al)$ is a generator of $\p_i$). As $e(\p_i)= l_i$, $d_i=1$. We need now to show that the conditions of Dedekind's criterion are satisfied in this case.
 Let $$F(X)=a_0(X)\ph_i^{L_i}(X)+\dots+a_{s_i}(X)\ph_i^{l_i}(X)+\dots+a_{L_i}(X)$$ be the $\ph_i$-adic expansion of $F(X)$. Let $Q_i(X)=a_0(X)\ph_i^{L_i-l_i}(X)+\dots+a_{s_i}(X)$ and $R_i(X)=a_{s_i+1}(X)\ph_i^{l_i-1}(X)+\dots+a_{L_i}(X)$. Then $\displaystyle{F(X)=Q_i(X)\ph_i^{l_i}(X)+R_i(X)}$.
 As $F(X)\equiv \prod_{i=1}^r\ph_i^{l_i}(X)\md{p}$, it follows that $R_i(X)\equiv 0 \md{p}$ and \linebreak $Q_i(X) \equiv \prod_{j\neq i}\ph_j^{l_j}(X) \md{p}$. Thus, $\nu_p(a_{s_i+k}(X))\ge 1$ for every $k=1,\dots,l_i$, and there exists some $H_i(X)\in\z[X]$ such that ${Q_i}(X)=\prod_{j\neq i}\ph_j^{l_j}(X)+pH_i(X)$. If we let $M(X)= (F(X)-\prod_{i=1}^r\ph_i^{l_i}(X))/p$, then we have $$M(X)=H_i(X)\ph_i^{l_i}(X)+(a_{s_i+1}(X)\ph_i^{l_i-1}(X)+
\dots+a_{L_i}(X))/p.$$ As $d_i=h_i=1$, $\nu_p(a_{L_i}(X))=1$. So $p$ does not divide $a_{L_i}(X)/p$ and, therefore, $\overline{\ph_i}(X)$ does not divide $\overline{M}(X)$. Hence, $p \nmid \mbox{ind}(\al)$.

\end{enumerate}

\subsection{Examples}\label{examples}

The following two examples are to emphasize the advantage of the test of Theorem \ref{Main 1} over the test of K-K's result. The third example presents a situation where Theorem \ref{Main 1} is not applicable, yet remark (1-a) of Subsection \ref{remarks} turns out to be useful in some sense.\\

\noindent{\bf Example 1:}

Let $F(X)=X^7+3X^5+18X^4+9X^3+6X^2+48X+24$. It is clear that $F(X)$ is \linebreak $3$-Eisenstein and, so, is irreducible over $\z$. Let $K=\q(\al)$, where $\al$ is a complex root of $F(X)$. We want to see how $\nu_2$ extends to $K$. Note first that $F(X)\equiv X^3(X^2+X+1)^2 \md 2$. Let $\phi_1(X)=X$ and $\phi_2(X)=X^2+X+1$. Then
\begin{align*}
F(X)&= \phi_1(X)^7+3\phi_1(X)^5+18\phi_1(X)^4+9\phi_1(X)^3+6\phi_1(X)^2+48\phi_1(X)+24 \\
&=(X-3)\phi_2(X)^3+(6X+17)\phi_2(X)^2-(28X+14)\phi_2(X)+(58X+24).
\end{align*}
It can be easily checked that $F(X)$ satisfies the $L_{\ph_i,\la_i}$ property for $i=1, 2$, where $\la_1=1$ and $\la_2=1/2$. For $i=1$, we have $\nu_2(a_{1,7}(X))=\nu_2(24)=3$, $s_1 =4$, and so $d_1=3$.  Moreover $F_{\ph_1}(Y)=Y^3+Y^2+1$, which is irreducible over $\F_{\ph_1} \cong \F_{2}$. For $i=2$, as $\nu_2(a_{2,4}(X))=\nu_2(58+24)=1$, $d_2=1$ and thus $F_{\ph_2}(Y)$  is linear and, thus, irreducible over $\F_{\ph_2} \cong \F_4$. It now follows from Theorem \ref{Main 1} that there are exactly two valuations $w_1$ and $w_2$ extending $\nu_2$ to $K$ with $e(w_1)=1$, $f(w_1)=3$, $e(w_2)=2$, and $f(w_2)=2$.\\

\noindent{\bf Example 2:}

Let $F(X)=X^6+36X^3+48$. It is clear that $F(X)$ is $3$-Eisenstein and, so, is irreducible over $\z$. Let $K=\q(\al)$ where $\al$ is a complex root of $F(X)$. We want to see how 2 factors in $\z_K$. We can see that $F(X)\equiv X^6 \md{2}$. Let $\ph(X)=X$; then $F(X)=\ph(X)^6+36\ph(X)^3+48$. It can be easily checked that $F(X)$ satisfies the $L_{\ph, \la}$ property, where $\la=2/3$. Since $\nu_2(a_6(X))=\nu_2(48)=4$ and $s=0$, $d=2$. It can be checked that $F_\ph(Y)=Y^2+Y+1$, which is irreducible over $\F_\ph \cong \F_2$. It thus follows from Theorem \ref{Main 1} that there is only one valuation $w$ of $K$ extending $\nu_2$, $e(w)=3$, and $f(w)=2$.\\

\noindent{\bf Example 3:}

Let $F(X)=X^9+48X^7+6X^6+24X^5+12X^4+3X^3+18X^2+6X+12$. It is clear that $F(X)$ is 3-Eisenstein and, so, is irreducible over $\z$. We can see that $$F(X)\equiv X^3(X-1)^2(X^2+X+1)^2 \md 2.$$ Letting $\ph(X)=X$, we see that $$F(X)=\ph(X)^9+48\ph(X)^7+6\ph(X)^6+24\ph(X)^5+12\ph(X)^4+3\ph(X)^3+18\ph(X)^2+6\ph(X)+12,$$ $s=6$ and, thus, $\la=2/3$. As $\nu_2(a_8(X))=1$, we have $\nu_2(a_8(X))/2=1/2<2/3$ and, therefore, $F(X)$ does not satisfy the $L_{\ph,\la}$ property. So, in fact, Theorem \ref{Main 1} cannot be used here. But at least we know from remark (1-a) of Subsection \ref{remarks} that the number of distinct valuations of $K$ extending $\nu_2$ is certainly more than 3.

\end{document}